\newcommand{\spc}{\ }
\newcommand{\defeq}{\mathrel{\mathop:}=}
\newcommand{\defequiv}{\mathrel{\mathop:}\equiv}
\newcommand{\equivdef}{\equiv\mathrel{\mathop:}}
\newcommand{\ul}[1]{\underline{#1}}
\newcommand{\lh}{\mathop{\mathrm{lh}}}
\newcommand{\concatenation}{{}^\frown}
\newcommand{\Seq}{\text{Seq}}
\newcommand{\cont}{\text{cont}}
\newcommand{\ASNIS}{AS\!N\!I\!S}
\newcommand{\res}{|\!\raisebox{1mm}{$\scriptscriptstyle\setminus$}}
\newcommand{\IPP}{\mathsf{IPP}}
\newcommand{\FIPPzero}{\mathsf{FIPP}_0}
\newcommand{\FIPPone}{\mathsf{FIPP}_1}
\newcommand{\FIPPtwo}{\mathsf{FIPP}_2}
\newcommand{\FIPPtwothree}{\mathsf{FIPP}_{2/3}}
\newcommand{\FIPPthree}{\mathsf{FIPP}_3}
\newcommand{\CUB}{\mathsf{CUB}}
\newcommand{\SigmaZeroCUB}{\Sigma^0_0\text{-}\mathsf{CUB}}
\newcommand{\SigmaOneCUB}{\Sigma^0_1\text{-}\mathsf{CUB}}
\newcommand{\PiCUB}{\Pi^0_1\text{-}\mathsf{CUB}}
\newcommand{\CUBprime}{\mathsf{CUB}'}
\newcommand{\SigmaZeroCUBprime}{\Sigma^0_0\text{-}\mathsf{CUB}'}
\newcommand{\SigmaOneCUBprime}{\Sigma^0_1\text{-}\mathsf{CUB}'}
\newcommand{\PiCUBprime}{\Pi^0_1\text{-}\mathsf{CUB}'}
\newcommand{\BSigmaOne}{B\Sigma^0_1}
\newcommand{\BSigmaTwo}{B\Sigma^0_2}
\newcommand{\BSigmaInfty}{B\Sigma^0_\infty}
\newcommand{\PCM}{\mathsf{PCM}}
\newcommand{\RT}{\mathrm{RT}}
\newcommand{\SigmaOneUB}{\Sigma^0_1\text{-}\mathsf{UB}}
\newcommand{\ExistsUBX}{\exists\text{-}\mathsf{UB}^X}
\newcommand{\PiOneUBRes}{\Pi^0_1\text{-}\mathsf{UB}\res}
\newcommand{\Ztwo}{\mathsf{Z}_2}
\newcommand{\RCA}{\mathsf{RCA}}
\newcommand{\RCAzero}{\mathsf{RCA}_0}
\newcommand{\WKLzero}{\mathsf{WKL}_0}
\newcommand{\ACAzero}{\mathsf{ACA}_0}
\newcommand{\ATRzero}{\mathsf{ATR}_0}
\newcommand{\PiCAzero}{\Pi^1_1\text{-}\mathsf{CA}_0}
\newtheorem{theorem}{Theorem}
\newtheorem{lemma}[theorem]{Lemma}
\newtheorem{proposition}[theorem]{Proposition}
\newtheorem{corollary}[theorem]{Corollary}
\theoremstyle{definition}
\newtheorem{definition}[theorem]{Definition}
\theoremstyle{remark}
\newtheorem{remark}[theorem]{Remark}
\begin{document}

\title{On Tao's ``finitary'' infinite pigeonhole principle\footnote{Published in The Journal of Symbolic Logic, volume 75, number 1, 2010, pages 355--371. \textcopyright~2010, Association for Symbolic Logic.}}
\author{Jaime Gaspar \and Ulrich Kohlenbach\thanks{We are grateful to Terence Tao. The first author was financially supported by the Portuguese Funda\c{c}\~{a}o para a Ci\^{e}ncia e a Tecnologia (grant SFRH/BD/36358/2007). The second author has been supported by the German Science Foundation (DFG Project KO 1737/5-1).}
\medskip\\
Fachbereich Mathematik, Technische Universit\"{a}t Darmstadt\\
Schlossgartenstra\ss{}e 7, 64289 Darmstadt, Germany\\
mail@jaimegaspar.com, kohlenbach@mathematik.tu-darmstadt.de}
\date{28 September 2010}
\maketitle

\begin{abstract}
In 2007, Terence Tao wrote on his blog an essay about soft analysis, hard analysis and the finitization of soft analysis statements into hard analysis statements. One of his main examples was a quasi-finitization of the infinite pigeonhole principle $\IPP$, arriving at the ``finitary'' infinite pigeonhole principle $\FIPPone$. That turned out to not be the proper formulation and so we proposed an alternative version $\FIPPtwo$. Tao himself formulated yet another version $\FIPPthree$ in a revised version of his essay.

We give a counterexample to $\FIPPone$ and discuss for both of the versions $\FIPPtwo$ and $\FIPPthree$ the faithfulness of their respective finitization of $\IPP$ by studying the equivalences $\IPP \leftrightarrow \FIPPtwo$ and $\IPP \leftrightarrow \FIPPthree$ in the context of reverse mathematics. In the process of doing this we also introduce a continuous uniform boundedness principle $\CUB$ as a formalization of Tao's notion of a correspondence principle and study the strength of this principle and various restrictions thereof in terms of reverse mathematics, i.e., in terms of the ``big five'' subsystems of second order arithmetic.
\end{abstract}

\section{Introduction}

In his article \cite{Tao}, T.\ Tao introduced the program of finitizing infinitary principles $P$ in analysis. This is achieved by showing (using compactness and continuity arguments) the existence of a uniform bound on some existential number quantifier (in a suitably reformulated version of $P$, e.g., corresponding to its Herbrand normal form) that is independent from the infinitary input of the principle (typically an infinite sequence in some metric space). From this bound one then reads off that the new (``finitary'') principle actually only refers to some finite part (e.g., a finite initial segment in the case of a sequence) of that infinitary input. As two of his prime examples he discusses the convergence principle for bounded monotone sequences of reals ($\PCM$) and the infinitary pigeonhole principle ($\IPP$). As observed in \cite{KohlenbachApplied}, the finitary version of $\PCM$ proposed by Tao directly follows from a well-studied proof-theoretic construction due to the second author, the so-called monotone G\"odel functional interpretation of $\PCM$. In \cite{KohlenbachApplied} a similar case is made concerning $\IPP$, i.e., it is shown that the monotone functional interpretation of $\IPP$ leads to a ``finitary'' version $\FIPPzero$ similar, but not identical, to the one proposed by Tao in his first 2007 posting of \cite{Tao} ($\FIPPone$). Like Tao, we use the prefix ``finitary'' here in quotation marks as neither of the finitizations of $\IPP$ is strictly finitary (in the sense the finitary form of $\PCM$ is) since non-finitary (in fact 2nd order) conditions on the Herbrand index function need to be imposed.

One difference between $\FIPPzero$ and $\FIPPone$ is that the former is formulated in a language of primitive recursive functionals whereas the latter is formulated in terms of sets and finitary set-functions. In closing the gap between the two formulations the second author reformulated $\FIPPzero$ into a variant $\FIPPtwo$ in the same vocabulary as the latter. However, as it turns out, $\FIPPtwo$ has a slightly weaker conclusion than $\FIPPone$. Subsequently, the first author found a counterexample to $\FIPPone$ (see section  \ref{section-counterexample} below). In reaction to that counterexample, Tao modified $\FIPPone$ (in a revised posting of \cite{Tao} from August 2008) to yet another version $\FIPPthree$ which keeps the original conclusion of $\FIPPone$ but strengthens the premise of the latter principle. In order to compare the two finitizations $\FIPPtwo$ and $\FIPPthree$ w.r.t.\ their faithfulness as finitizations of $\IPP$ we investigate in this paper the strength of the equivalences $\IPP\leftrightarrow \FIPPtwo$ and $\IPP \leftrightarrow \FIPPthree$ in terms of the systems $\RCAzero$, $\WKLzero$ and $\ACAzero$ from the program of reverse mathematics (see \cite{Simpson}). For $\FIPPzero$ it follows from the reasoning given in \cite{KohlenbachApplied} that it implies $\IPP$ over a system of functionals of finite type that is conservative over Kalmar elementary arithmetic and that the implication $\IPP\to\FIPPzero$ follows with an additional use of WKL (needed to show that continuous functionals $\Phi:2^{\mathbb{N}}\to\mathbb{N}$ are bounded, see \cite{KohlenbachFoundation,Simpson}). This suggests that the version $\FIPPtwo$ that was prompted by $\FIPPzero$ has a similar behavior: more precisely we show that $\RCAzero$ proves $\FIPPtwo\to \IPP$ while $\WKLzero$ proves $\IPP\to \FIPPtwo$.

For $\FIPPthree$ the direction $\FIPPthree\to\IPP$ still follows in $\RCAzero$. The implication $\IPP\to \FIPPthree$ can be established by an application of the Bolzano-Weierstra\ss{} property of the compact metric space $[n]^{\mathbb{N}}$ (with respect to the Baire metric) which in turn is provable in (and in fact equivalent to) $\ACAzero$ (see \cite{Simpson}). So $\ACAzero$ proves $\IPP\rightarrow \FIPPthree$. This, however, is unsatisfactory as $\ACAzero$ is much stronger than $\IPP$ itself, whereas $\WKLzero$ does not prove $\IPP$ by a result due to \cite{Hirst}. So it is natural to try to establish the implication $\IPP\to \FIPPthree$ by a WKL-type ``Heine-Borel''-compactness argument rather than by using sequential compactness (requiring $\ACAzero$). Towards this goal and aiming at a formalization of Tao's informal notion of ``correspondence principle'' from \cite{Tao(08)} we formulate a ``continuous uniform boundedness principle'' $\CUB$ that generalizes the usual FAN-uniform boundedness obtained from (the contrapositive form of) WKL. In fact, $\CUB$ restricted to $\Sigma^0_1$ formulas, denoted by $\SigmaOneCUB$, is equivalent to $\WKLzero$ over $\RCAzero$ and the proof that $\WKLzero$  implies $\IPP\to\FIPPtwo$ can nicely be recasted as an application of $\SigmaOneCUB$ as we will do below. Also $\IPP\to\FIPPthree$ can be established by an application of $\CUB$. However, this time it seems that a $\Pi^0_1$ instance, i.e., a use of $\PiCUB$ is needed. Unfortunately, $\PiCUB$ is no longer derivable in $\WKLzero$ but, in fact, is equivalent to $\ACAzero$ (over $\RCAzero$) which shows that the strength of the correspondence principle as formalized by $\CUB$ crucially depends on the logical complexity of the instance involved. In fact, over $\RCA$ it turns out that the unrestricted $\CUB$ even is equivalent to full second order comprehension over numbers, i.e., to $\Ztwo$. While leaving open the question whether $\WKLzero$ proves $\IPP \to \FIPPthree$, the results in this paper may suggest that the answer is negative and at the same time show that the logical structure of the formula to which a correspondence principle such as $\CUB$ is applied matters in determining how close a finitization of some infinitary principle stays to that principle.

The following diagram summarizes the picture established in this paper:
\begin{equation*}
  \xymatrix@C=40pt@R=15pt{
  \Ztwo \ar@{<->}[r]^{\RCA\ \, }                   & \CUB                                   &                                               \\
  \PiCAzero \ar@{~>}[u]                            &                                        &                                               \\
  \ATRzero \ar@{~>}[u]                             &                                        &                                               \\
  \ACAzero \ar@{~>}[u] \ar@{<->}[r]^{\RCAzero\ \;} & \PiCUB \ar[r]^{\RCAzero\quad\ }        & \ar@/^0.9pc/[l]|? (\IPP {\to} \FIPPthree)     \\
  \WKLzero \ar@{~>}[u] \ar@{<->}[r]^{\RCAzero\ \,} & \SigmaZeroCUB \ar[r]^{\RCAzero\quad\ } & \ar@/^0.9pc/[l]|? (\IPP {\to} \FIPPtwo)       \\
  \RCAzero \ar@{~>}[u] \ar@{->}[rr]                &                                        & (\FIPPtwothree {\to} \IPP)
  }
\end{equation*}

\section{Definitions}

In this paragraph we collect some notation and formulate the infinite pigeonhole principle $\IPP$ as well as the three ``finitary'' infinite pigeonhole principles $\FIPPone$, $\FIPPtwo$ and $\FIPPthree$.

All the definitions take place in the context of the language of $\RCAzero$, where $\RCAzero$ is the base system used in reverse mathematics (see \cite{Simpson} for details). All undefined notations are to be understood in the sense of \cite{Simpson}. We need to be rather formal in our definitions working over the weak base system $\RCAzero$. For example, in point 1 of definition \ref{def:asymptoticallyStable} we need to assume the existence of the union of an infinite sequence of sets, since $\RCAzero$ in general doesn't prove that such a union exists. But over sufficiently strong systems such as $\ACAzero$, the clause stating the existence of the union set is redundant.

Tao formulated his ``finitary'' infinite pigeonhole principle using \emph{set functions}, i.e., functions that take as input a finite subset of $\mathbb{N}$ and return as output a natural number. Those are, however, objects of a higher type than those available in the language of $\RCAzero$, so we had to reformulate Tao's principle using functions from $\mathbb{N}$ to $\mathbb{N}$ by identifying a finite subset of $\mathbb{N}$ with a natural number encoding it.

\begin{definition}
  We denote by $[i]$ the set $\{j : j \leq i\}$ of the first $i + 1$ natural numbers. If $i = 0$, then we make the convention that $[i - 1]$ is the
empty set $\emptyset$.
\end{definition}

\begin{definition}
  If $l \in \Seq$, then we define $A_l$ to be the set \emph{encoded} by the finite sequence with code $l$, i.e., $A_l \defeq \{l(i) : i < \lh l\}$. We say that $l$ is a \emph{code} of a set $A$ if $A = A_l$. One can also consider the minimal code which then is called \emph{the} code of $A$.
\end{definition}

\begin{definition}
  If $f : \mathbb{N} \to \mathbb{N}$ is a function and $m > 0$ then we define $\bar f m$ to be the code of the finite sequence $\langle f(0),\ldots,f(m - 1) \rangle$. For $m = 0$ we make the convention that $\bar f m$ is the code of the empty sequence $\langle \rangle$. If $s \in \Seq$, then we denote the function that extends $s$ by zeros by $s \concatenation o$.
\end{definition}

\begin{definition}
  Let $f : X \to Y$ be a function between sets $X,Y$. We define $|A| = m$ to mean ``exists an $f : [m - 1] \to A$ one-to-one and onto''. Then we define $|A| \geq m \defequiv \exists m' \spc (|A| = m' \wedge m' \geq m)$, and analogously for $|A| > m$, $|A| \leq m$ and $|A| < m$.
\end{definition}

\begin{definition} \mbox{}
\label{def:asymptoticallyStable}
  \begin{enumerate}
    \item A sequence $(l_m)$ represents a \emph{nested sequence with union} of finite subsets of $\mathbb{N}$ if and only if $\forall m \spc (l_m \in \Seq)$, $\forall m \spc (A_{l_m} \subseteq A_{l_{m + 1}})$ and $\bigcup_m A_{l_m}$ exists.
    \item A sequence $(l_m)$ \emph{weakly converges} to an infinite set $A$ if and only if $\forall m \spc (l_m \in \Seq)$ and for all finite sets $B$ we have $\exists i \spc \forall j \geq i \spc (A_{l_j} \cap B = A \cap B)$. Then we say that $(l_m)$ \emph{weakly converges} if and only if it weakly converges to some infinite set.
    \item A function $F : \mathbb{N} \to \mathbb{N}$ is \emph{extensional} if and only if $\forall l,l' \in \Seq \spc [A_l = A_{l'} \to F(l) = F(l')]$. Alternatively, one can always use the unique minimal code which allows one to drop the extensionality requirement.
    \item A function $F : \mathbb{N} \to \mathbb{N}$ is \emph{asymptotically stable}, denoted by $F \in AS$, if and only if it is extensional and for all nested sequences with union $(l_m)$ we have $\exists i \spc \forall j \geq i \spc [F(l_i) = F(l_j)]$.
    \item A function $F : \mathbb{N} \to \mathbb{N}$ is \emph{asymptotically stable near infinite sets}, denoted by $F \in \ASNIS$, if and only if it is extensional and for all weakly convergent sequences $(l_m)$ we have $\exists i \spc \forall j \geq i \spc [F(l_i) = F(l_j)]$.
  \end{enumerate}
\end{definition}

\begin{remark}
  A nested sequence with finite union is never weakly convergent (otherwise it would converge to the finite union but a weakly convergent sequence must converge to an infinite set). Every nested sequence with infinite union is weakly convergent (to the infinite union), but there are weakly convergent sequences that are not nested (e.g., $A_{l_m} \defeq [m] \cup \{m + 2\}$ weakly converges to $\mathbb{N}$ but is not a chain).

  We have $\ASNIS \subsetneq AS$ (if $F \in \ASNIS$, then $F$ stabilizes over a nested chain with finite union because the chain itself stabilizes and $F$ also stabilizes over a nested chain with infinite union because such a chain is weakly convergent, so $F \in AS$; the $F \in AS$ from the counterexample to $\FIPPone$ below is not in $\ASNIS$ otherwise it would also be a counterexample to true $\FIPPthree$).
\end{remark}

\begin{definition} \mbox{}
  \begin{enumerate}
    \item The \emph{infinite pigeonhole principle} $\IPP$ is the principle: every coloring $f$ of $\mathbb{N}$ into $n + 1$ colors has an infinite color class $f^{-1}(c)$. In symbols:
      \begin{equation*}
        \forall n \spc \forall f : \mathbb{N} \to [n] \spc \exists c \in [n] \spc [f^{-1}(c) \text{ infinite}],
      \end{equation*}
      where the set $f^{-1}(c)$ exists by $\Sigma^0_0$ comprehension: $\forall x \spc [x \in f^{-1}(c) \leftrightarrow (x,c)\in f]$.
    \item The \emph{first ``finitary'' infinite pigeonhole principle} $\FIPPone$ is the principle: for all asymptotically stable functions $F$ there exists a $k$ such that every coloring $f$ of $[k]$ into $n + 1$ colors has a color class $A = f^{-1}(c)$ that is ``big'' in the sense of $|A| > F(A)$. In symbols:
      \begin{equation*}
        \forall n \spc \forall F \in AS \spc \exists k \spc \forall f : [k] \to [n] \spc \exists l \in \Seq \spc \exists c \in [n] \spc [A_l = f^{-1}(c) \wedge |A_l| > F(l)].
      \end{equation*}
    \item The \emph{second ``finitary'' infinite pigeonhole principle} $\FIPPtwo$ is the principle: for all asymptotically stable functions $F$ there exists a $k$ such that every coloring $f$ of $[k]$ into $n + 1$ colors has a monochromatic set $A$ that is ``big'' in the sense of $|A| > F(A)$. In symbols:
      \begin{equation*}
        \forall n \spc \forall F \in AS \spc \exists k \spc \forall f : [k] \to [n] \spc \exists l \in \Seq \spc \big[A_l \subseteq [k] \wedge |A_l| > F(l) \wedge f|_{A_l} \text{ constant}\big].
      \end{equation*}
    \item The \emph{third ``finitary'' infinite pigeonhole principle} $\FIPPthree$ is analogous to $\FIPPone$ but with $AS$ replaced by $\ASNIS$.
  \end{enumerate}
\end{definition}

\begin{remark}
  $\IPP$ can also be formulated without reference to the set $f^{-1}(c)$ as 
  \begin{equation*}
    \forall n \spc \forall f : \mathbb{N} \to [n] \spc \exists c \in [n] \spc [\forall m \spc \exists k > m \spc (f(k) = c)].
  \end{equation*}
  In the presence of $\Sigma^0_0$ comprehension (and hence over $\RCAzero$) there is no difference between the two formulations.
\end{remark}

By a well-known result due to J.\ L.\ Hirst \cite{Hirst}, $\IPP$ is equivalent to the bounded collection principle for $\Sigma^0_2$ formulas (often called $\BSigmaTwo$, though set parameters are allowed in the context of $\RCAzero$) and is not provable in $\WKLzero$:

\begin{proposition}[\cite{Hirst}]
  \label{Hirst}
  $\WKLzero$ does not prove $\IPP$.
\end{proposition}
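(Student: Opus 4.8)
The plan is to reduce the statement to known separations in first-order arithmetic, exploiting the equivalence $\IPP \leftrightarrow \BSigmaTwo$ over $\RCAzero$ recalled just above (due to Hirst): by that equivalence it suffices to produce a model of $\WKLzero$ in which $\BSigmaTwo$ fails. Note first that such a model is necessarily nonstandard. Indeed, every $\omega$-model of $\RCAzero$ is closed under $\Delta^0_1$-definability, so given $f : \mathbb{N} \to [n]$ in the model the genuinely infinite color class supplied by the ordinary pigeonhole principle exists there by $\Sigma^0_0$-comprehension, and ``infinite'' is absolute for $\omega$-models; hence $\IPP$ holds in every $\omega$-model of $\RCAzero$. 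Thus the unprovability of $\IPP$ in $\WKLzero$ is really a phenomenon of induction/collection strength rather than of comprehension.

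First I would invoke the Paris--Kirby hierarchy theorem, which gives that $\BSigmaTwo$ is strictly stronger than $\mathsf{I}\Sigma_1$: there is a countable model $M \models \mathsf{I}\Sigma_1$ in which some instance of the $\Sigma_2$ bounded collection scheme fails. Next I would pass from $M$ to a model of $\WKLzero$ with first-order part $M$, using the machinery behind Harrington's conservativity theorem (see \cite{Simpson}, Chapter IX): taking $\mathcal{S}$ to be the $\Delta^0_1$-definable-with-parameters subsets of $M$ makes $(M,\mathcal{S})$ a model of $\RCAzero$, and any countable model of $\RCAzero$ extends, by forcing in low solutions to bounded trees, to a model $(M,\mathcal{S}') \models \WKLzero$ without changing the first-order part.

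Finally I would observe that $\BSigmaTwo$, restricted to formulas with number parameters only, is precisely the ordinary first-order $\Sigma_2$ collection scheme; hence if $(M,\mathcal{S}') \models \BSigmaTwo$ then $M$ would satisfy that first-order scheme, contradicting the choice of $M$. So $(M,\mathcal{S}') \models \WKLzero$ and $(M,\mathcal{S}') \models \neg\BSigmaTwo$, and by Hirst's equivalence $(M,\mathcal{S}') \models \neg\IPP$; therefore $\WKLzero$ does not prove $\IPP$.

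There is no essential difficulty here beyond assembling (a) Hirst's equivalence $\IPP \leftrightarrow \BSigmaTwo$, (b) the Paris--Kirby separation $\mathsf{I}\Sigma_1 \subsetneq \BSigmaTwo$, and (c) Harrington's $\Pi^1_1$-conservativity of $\WKLzero$ over $\RCAzero$ (equivalently, the fact that every model of $\mathsf{I}\Sigma_1$ underlies a model of $\WKLzero$ with the same first-order part). The only points that require a moment of care are identifying the first-order reduct of a $\WKLzero$-model with a model of $\mathsf{I}\Sigma_1$, and checking that failure of $\Sigma_2$ collection in that reduct already entails failure of $\BSigmaTwo$ in the ambient second-order structure --- which is immediate, since restricting to number parameters only weakens the scheme. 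Alternatively, the argument can be run at the level of a single arithmetic (hence trivially $\Pi^1_1$) sentence: fix an instance $\psi$ of $\BSigmaTwo$, universally closed over its number parameter, that by Paris--Kirby is not provable in $\mathsf{I}\Sigma_1$; were $\WKLzero \vdash \psi$, conservativity would give $\mathsf{I}\Sigma_1 \vdash \psi$, a contradiction, so $\WKLzero \not\vdash \BSigmaTwo$ and hence $\WKLzero \not\vdash \IPP$.
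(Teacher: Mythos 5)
The paper does not prove this: it is stated as a citation to Hirst's thesis \cite{Hirst}, the original source. Your argument is correct and is the standard one, combining (i) Hirst's equivalence $\IPP \leftrightarrow \BSigmaTwo$ over $\RCAzero$, (ii) the Kirby--Paris result that $\mathrm{I}\Sigma_1$ does not prove first-order $B\Sigma_2$, and (iii) Harrington's $\Pi^1_1$-conservativity of $\WKLzero$ over $\RCAzero$ together with the first-order conservativity of $\RCAzero$ over $\mathrm{I}\Sigma_1$. The one point worth flagging is that step (iii) silently chains two conservation facts --- your closing paragraph's jump from $\WKLzero \vdash \psi$ to $\mathrm{I}\Sigma_1 \vdash \psi$ needs both --- but you do supply the second one earlier via the $\Delta^0_1$-definable-sets construction of a model of $\RCAzero$, so the argument is complete. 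Your opening observation that the separation must occur in a nonstandard model (since $\IPP$ holds in every $\omega$-model of $\RCAzero$) is a correct and useful sanity check.
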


Since $\BSigmaTwo$ and hence $\IPP$ easily follows from $\Sigma^0_2$ induction we have that $\RCA$ proves $\IPP$ as well as $\ACAzero$ proves $\IPP$.

\section{Technical lemmas}

In this section we start by collecting in lemma \ref{lemma:cardinality} some folklore properties about the cardinality of finite sets that we will need later. We first note that the formulas $\lh(s) = m$, $s(i) = m$, $s\subseteq t$ (expressing that the finite sequence encoded by $s$ is an initial segment of the sequence encoded by $t$), etc.\ are all $\Sigma^0_0$ (see \cite{Simpson}).

At some point we will need to talk about (continuous) functionals $\phi : [n]^\mathbb{N} \to \mathbb{N}$ within $\RCAzero$, and to do so we need to show the existence of a code (in the sense of \cite{Simpson}) for them. As shown in \cite{KohlenbachFoundation}, the existence of such a code is equivalent to the existence of a so-called associate of $\phi$ in the sense of Kleene and Kreisel. In the cases at hand it turns out to be easier to construct an associate rather than to produce a code directly. For completeness we include lemma \ref{lemma:associateImpliesHavingCode} which shows that the existence of an associate implies the existence of a code. In the first point of lemma \ref{lemma:uniformContinuity} we show that every $\Sigma^0_0$ formula is provably ``uniformly continuous'' in $\RCAzero$. In the second point we prove that every formula of the form $\forall f : \mathbb{N} \to [n] \spc A(f)$ with $A\in \Sigma^0_0$ is (over $\RCAzero$) equivalent to a $\Pi^0_1$ formula. Finally, in the first point of lemma \ref{lemma:limitUniquenessAndStabilityPoint} we show that if $F \in \ASNIS$ and $A$ is an infinite set, then the stable value that $F$ eventually attains on a sequence weakly converging to $A$ doesn't depend on the sequence. In the second point we show that if $F \in \ASNIS$, then $F$ is ``continuous'' in the sense of the Baire space $\mathbb{N}^\mathbb{N}$ with the metric
\begin{equation*}
  d(f,g) \defeq
  \begin{cases}
    2^{-m} & \text{if exists } m = \min m' \spc [f(m') \neq g(m')], \\
    0      & \text{otherwise,}
  \end{cases}
\end{equation*}
at points that are characteristic functions of infinite sets $A$.

\begin{lemma}
  \label{lemma:cardinality}
  $\RCAzero$ proves the following.
  \begin{enumerate}
    \item $A$ is a finite if and only if $\exists m \spc (|A| = m)$ if and only if $A$ has a code.
    \item If $l \in Seq$, then the formulas $|A_l| = m$, $|A_l| <m$, $|A_l| > m$ are equivalent to $\Sigma^0_0$ formulas.
    \item If $A$ is infinite, then $\forall i \spc \exists j \spc (|A \cap [j - 1]| = i)$.
    \item If $A$ and $B$ are finite sets and $A \subseteq B$, then $|A| \leq |B|$.
  \end{enumerate}
\end{lemma}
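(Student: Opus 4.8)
The plan is to prove the four parts essentially independently, the unifying observation being that every cardinality notion occurring here can be read off from a \emph{code} of the set by a primitive recursive "counting" operation, so that the quantifiers involved are either bounded (hence the formulas are $\Sigma^0_0$) or can be eliminated by $\Sigma^0_1$-induction — both tools being available in $\RCAzero$. Throughout I take "$A$ is finite" in Simpson's sense, i.e. $A$ is bounded: $\exists n\,\forall x\,(x\in A\to x\le n)$.

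For part 1 I would prove the three implications in a cycle. (a) "$A$ has a code'' $\Rightarrow$ "$\exists m\,(|A|=m)$'': given $l\in\Seq$ with $A=A_l$, delete repetitions from $\langle l(0),\dots,l(\lh l-1)\rangle$ — keeping only the first occurrence of each value, which is bounded minimization and so available — to get a repetition-free code $l'$ of some length $m\le\lh l$; then $i\mapsto l'(i)$ is a bijection $[m-1]\to A$, so $|A|=m$. (b) "$\exists m\,(|A|=m)$'' $\Rightarrow$ "$A$ finite'': if $f:[m-1]\to A$ is onto then $A\subseteq[\max_{i<m}f(i)]$, so $A$ is bounded. (c) "$A$ finite'' $\Rightarrow$ "$A$ has a code'': if $A$ is bounded by $n$, then since membership in $A$ is decidable we may form by primitive recursion the finite sequence listing the elements of $A\cap[n]$ in increasing order; this is a code of $A$.

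For part 2, fix $l\in\Seq$. The key step is to bound any witnessing bijection: if $f:[m-1]\to A_l$ is one-to-one and onto then $m\le\lh l$ (and $m=0$ iff $\lh l=0$), and each value $f(i)$ is among $l(0),\dots,l(\lh l-1)$, hence bounded by a primitive recursive $M(l)$, so the restriction of $f$ to $[m-1]$ is coded by some number $s<B(l)$ for an explicit primitive recursive $B$. Therefore "$|A_l|=m$'' is equivalent to $\exists s<B(l)\,[\,s$ codes a one-to-one onto map $[m-1]\to A_l\,]$, whose matrix is $\Sigma^0_0$, so by closure of $\Sigma^0_0$ under bounded quantification the whole formula is $\Sigma^0_0$. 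Then "$|A_l|<m$'' is equivalent to $\exists m'<m\,(|A_l|=m')$ and "$|A_l|>m$'' to $\exists m'\,(m<m'\le\lh l\wedge|A_l|=m')$, again bounded quantifications of $\Sigma^0_0$ formulas. For part 3, I would argue by induction on $i$: the formula $\exists j\,(|A\cap[j-1]|=i)$ is $\Sigma^0_1$ in the parameter $A$ (the code of $A\cap[j-1]$ is primitive recursive in $j$, and "$|A_l|=i$'' is $\Sigma^0_0$ by part 2), so $\Sigma^0_1$-induction applies; for $i=0$ take $j=0$, and in the step, given $j$ with $|A\cap[j-1]|=i$, use that $A$ is unbounded to take the least $a\in A$ with $a\ge j$, set $j'=a+1$, and note $A\cap[j'-1]=(A\cap[j-1])\cup\{a\}$ with $a\notin A\cap[j-1]$, so $|A\cap[j'-1]|=i+1$ (again via a code and part 2). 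For part 4, by part 1 fix bijections $f:[m-1]\to A$ and $g:[n-1]\to B$ with $|A|=m$, $|B|=n$; since $A\subseteq B$, $g^{-1}\circ f:[m-1]\to[n-1]$ is one-to-one, so it suffices to prove the finite pigeonhole fact "if some one-to-one $h:[m-1]\to[n-1]$ exists then $m\le n$'', which $\RCAzero$ proves by an easy induction on $n$.

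The main obstacle is the bookkeeping in part 2 — explicitly bounding the witnessing bijection by a primitive recursive function of $l$ so that the a priori $\Sigma^0_1$ formula "$|A_l|=m$'' collapses to $\Sigma^0_0$ — and, relatedly, making sure that the "delete repetitions'' and "canonical code'' constructions in parts 1 and 3 really are carried out by primitive recursion (equivalently bounded minimization together with $\Sigma^0_1$-induction) and not by a covert appeal to stronger comprehension. Everything else is routine.
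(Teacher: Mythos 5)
The paper itself gives no proof of this lemma: it is explicitly presented as a collection of ``folklore properties'' and the proof is omitted. So there is nothing in the paper to compare your argument against; I will instead assess it on its own terms.

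Your proof is correct and is essentially the standard one a reader would supply. The one point that genuinely deserves the attention you give it is in part~2: as written in the paper, $|A_l| = m$ involves a quantifier over a \emph{function} $f : [m-1] \to A_l$, which is a priori second order, so the formula is not even arithmetical on its face. Your observation that any such $f$ is a finite function and hence may be coded by a natural number bounded by an explicit (say, exponential, hence available in $\RCAzero$) function of $l$ and $m$ is exactly what collapses it to $\Sigma^0_0$; parts~1, 3 and~4 then ride on this. Two small points worth being explicit about if you were to write this up in full: (i) the bound $B(l)$ in part~2 is not literally a term of the language, so one is implicitly using that $\Sigma^0_0$ is closed (provably in $\RCAzero$) under quantifiers bounded by $\Delta^0_1$-definable functions such as exponentiation --- this is a standard convention but should be flagged; (ii) in part~3 the step ``take the least $a\in A$ with $a\ge j$'' uses the $\Sigma^0_0$ least-number principle together with the unboundedness of $A$, both of which are fine in $\RCAzero$, but the induction formula should be checked to be $\Sigma^0_1$ \emph{with $A$ as a set parameter}, which your part~2 guarantees. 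With those caveats noted, the cycle in part~1, the bounded-search collapse in part~2, the $\Sigma^0_1$-induction in part~3, and the reduction of part~4 to the finite pigeonhole via $g^{-1}\circ f$ are all sound and complete the lemma.
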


\begin{lemma}[\cite{KohlenbachFoundation}]
  \label{lemma:associateImpliesHavingCode}
  If $\alpha : \mathbb{N} \to \mathbb{N}$ is an associate of $\phi : [n]^\mathbb{N} \to \mathbb{N}$, i.e.,
  \begin{enumerate}
    \item $\forall \beta : \mathbb{N} \to [n] \spc \exists m \spc [\alpha(\bar\beta m) > 0]$;
    \item $\forall \beta : \mathbb{N} \to [n] \spc \forall m \spc \big[\big(m = \min m' \spc \alpha(\bar \beta m') > 0\big) \to \alpha(\bar \beta m) = \phi(\beta) + 1\big]$;
  \end{enumerate}
  then $\phi$ has a code as a continuous function $[n]^\mathbb{N} \to \mathbb{N}$ in the sense of the definition II.6.1 in \cite{Simpson}.
\end{lemma}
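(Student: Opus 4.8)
The plan is to build a code $\Phi$ for $\phi$ explicitly out of the associate $\alpha$. Recall from definition~II.6.1 in \cite{Simpson} that a code for a continuous partial function between complete separable metric spaces $\widehat{A},\widehat{B}$ is a set of conditions, which we write $(a,r)\,\Phi\,(b,\varepsilon)$ with $a$ in the countable dense subset of $\widehat{A}$, $b$ in that of $\widehat{B}$ and $r,\varepsilon\in\mathbb{Q}^+$, with intended meaning ``$d(x,a)<r$ implies $d(\phi(x),b)\le\varepsilon$'', subject to a consistency requirement and to closure under shrinking the first ball and enlarging the second ball. Here $\widehat{B}=\mathbb{N}$ carries the discrete metric, so on the target side $d(b,b')\in\{0,1\}$ and a condition with $\varepsilon\le 1$ just asserts ``$d(x,a)<r$ implies $\phi(x)=b$''; and $\widehat{A}=[n]^\mathbb{N}$ carries the Baire metric, whose countable dense subset consists of the eventually-zero sequences $\sigma\concatenation o$ with $\sigma$ a finite string over $[n]$, and for such $\sigma$ the ball $\{\beta:d(\beta,\sigma\concatenation o)<2^{-(\lh\sigma-1)}\}$ is exactly the basic clopen set $[\sigma]\defeq\{\beta:\bar\beta(\lh\sigma)=\sigma\}$.

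The one real obstacle is that the second hypothesis on $\alpha$ fixes its value only at the \emph{first} length along $\beta$ at which $\alpha$ becomes positive, so one cannot naively turn every $\sigma$ with $\alpha(\sigma)>0$ into a condition, since two comparable such strings may carry conflicting values and destroy consistency. The remedy is to use only the minimal such strings: let $S$ be the set of finite strings $\sigma$ over $[n]$ with $\alpha(\sigma)>0$ and $\alpha(\sigma')=0$ for every proper initial segment $\sigma'\subsetneq\sigma$, and for $\sigma\in S$ put $k_\sigma\defeq\alpha(\sigma)-1$. By the first hypothesis on $\alpha$, every $\beta:\mathbb{N}\to[n]$ has a least $m$ with $\alpha(\bar\beta m)>0$, and then $\bar\beta m\in S$, so $\bigcup_{\sigma\in S}[\sigma]=[n]^\mathbb{N}$; moreover $S$ is an antichain for the initial-segment ordering (if $\sigma\subsetneq\tau$ with $\sigma\in S$ then $\tau\notin S$), so the sets $[\sigma]$ for $\sigma\in S$ form a partition of $[n]^\mathbb{N}$. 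Finally, if $\beta\in[\sigma]$ then the least $m$ as above equals $\lh\sigma$, so by the second hypothesis on $\alpha$ we get $\phi(\beta)=\alpha(\sigma)-1=k_\sigma$.

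Now define $\Phi$ to consist of those conditions $(a,r)\,\Phi\,(b,\varepsilon)$ for which there is a $\sigma\in S$ with $B(a,r)\subseteq[\sigma]$ and $d(b,k_\sigma)<\varepsilon$; note that since the $[\sigma]$ partition $[n]^\mathbb{N}$ and $a\in B(a,r)$, the witness $\sigma$ is unique when it exists. The defining formula is $\Sigma^0_0$ in $\alpha$: for $r>0$ a possible witness $\sigma$ has $\lh\sigma$ bounded by a number computable from $r$ (from $B(a,r)\subseteq[\sigma]$ we get $r\le 2^{-(\lh\sigma-1)}$) and entries $\le n$, so the quantifier over $\sigma$ is bounded, and the relations $\sigma\in S$, $B(a,r)\subseteq[\sigma]$, $d(b,k_\sigma)<\varepsilon$ are elementary; hence $\Phi$ exists by $\Delta^0_1$ comprehension, i.e.\ in $\RCAzero$. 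It then remains to check, routinely and within $\RCAzero$, that $\Phi$ meets the requirements of definition~II.6.1 and codes $\phi$. Consistency: two conditions with the same source ball come from the same $\sigma\in S$, so their target data satisfy $d(b,b')\le d(b,k_\sigma)+d(k_\sigma,b')<\varepsilon+\varepsilon'$; the two closure properties follow from the triangle inequality in $\widehat{A}$ and $\widehat{B}$ respectively, retaining the same $\sigma$. That $\Phi$ codes $\phi$: given $\beta$, let $\sigma\in S$ be the unique string with $\beta\in[\sigma]$; for every $m\ge\lh\sigma$ we have $\beta\in[\bar\beta m]=B(\bar\beta m\concatenation o,2^{-(m-1)})\subseteq[\sigma]$, so $(\bar\beta m\concatenation o,2^{-(m-1)})\,\Phi\,(k_\sigma,\varepsilon)\in\Phi$ for every $\varepsilon>0$; these are arbitrarily small source balls about $\beta$ together with arbitrarily small target tolerances, all carrying the value $k_\sigma$, whence the partial function coded by $\Phi$ is total, is defined at $\beta$, and takes there the value $k_\sigma=\phi(\beta)$.
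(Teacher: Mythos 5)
Your overall plan --- produce the code $\Phi$ directly from the associate $\alpha$, resolving the consistency problem by only ever reading $\alpha$ at the \emph{first} length along each branch where it becomes positive --- is the same as the paper's, and it works. The two proofs resolve that consistency problem in slightly different but morally equivalent ways: the paper first replaces $\alpha$ by the induced neighborhood function (propagating the value at the minimal positive string to all extensions), whereas you leave $\alpha$ alone and restrict attention to the antichain $S$ of minimal positive strings, baking the minimality check into the $\Sigma^0_0$ definition of $\Phi$. Either choice is fine; yours avoids modifying $\alpha$ at the cost of a somewhat busier defining formula.

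The one point that needs fixing is your choice of metric on $[n]^\mathbb{N}$. You silently work with the Baire metric, and your key simplification --- that the open ball $B(\sigma\concatenation o,\,2^{-(\lh\sigma-1)})$ is \emph{exactly} the clopen cylinder $[\sigma]$, so that $B(a,r)\subseteq[\sigma]$ reduces to $a\concatenation o\in[\sigma]\wedge r\le 2^{-(\lh\sigma-1)}$ --- is true only for the Baire metric. But the lemma asks for a code ``in the sense of definition~II.6.1 in \cite{Simpson}'', and the ambient coding of $[n]^\mathbb{N}$ as a complete separable (indeed compact) metric space that the paper uses, and that feeds into the later application of Brown's compactness theorem, is the product metric from Simpson's example~II.5.5, namely $d(a,b)=\sum_i 2^{-i}\,|a_i-b_i|/(1+|a_i-b_i|)$. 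Under that metric, balls and cylinders only correspond up to a one-step shift (the paper records exactly the two needed inclusions: $d(a,b)<2^{-r}\Rightarrow\overline{a\concatenation o}\,r=\overline{b\concatenation o}\,r$, and $\overline{a\concatenation o}\,r=\overline{b\concatenation o}\,r\Rightarrow d(a,b)<2^{-(r-1)}$), so your $\Sigma^0_0$ definition of $\Phi$ and the subsequent verifications have to be restated in terms of ``the unique $m$ with $2^{-(m+1)}<r\le 2^{-m}$'' rather than in terms of exact equality of balls and cylinders. This is a local repair, not a conceptual one --- your $S$-antichain idea carries over verbatim --- but as written the proof is constructing a code for a different coded metric space than the one the paper (and its later use of the lemma) is committed to, and you should either translate into the product-metric coding or explicitly argue that a Baire-metric code for $[n]^\mathbb{N}$ may be substituted throughout.
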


\begin{proof}
  We may assume that $\alpha$ is a \emph{neighborhood function}, i.e., $\forall i,j \spc [i \subseteq j \wedge \alpha(i) > 0 \to \alpha(i) = \alpha(j)]$ for otherwise we would replace $\alpha$ by the associate of $\phi$ and neighborhood function
  \begin{equation*}
    \alpha'(j) \defeq
    \begin{cases}
      \alpha(i) & \text{if $j \in \Seq$ and exists the shortest $i \subseteq j$ such that $\alpha(i) > 0$,} \\
      0         & \text{otherwise.}
    \end{cases}
  \end{equation*}

  In \cite{Simpson} (example II.5.5) a construction is given of a code $A,d$ (where $A \subseteq \mathbb{N}$ and $d : A \times A \to \mathbb{R}$) for infinite product spaces of complete separable metric spaces. Particularizing the construction for $[n]^\mathbb{N} = \prod_{i = 0}^\infty [n]$ we get
  \begin{gather*}
    A = \{\langle a_i : i \leq j \rangle : j \in \mathbb{N} \wedge \forall i \leq j \spc (a_i \in [n])\}, \\
    d(a,b) = \sum_{i = 0}^\infty 2^{-i} \cdot \frac{|(a \concatenation o)(i) - (b \concatenation o)(i)|}{1 + |(a \concatenation o)(i) - (b \concatenation o)(i)|}.
  \end{gather*}
  One easily verifies that
  \begin{enumerate}
    \item $\forall a,b \in A \spc [d(a,b) < 2^{-r} \to \overline{a \concatenation o} \, r = \overline{b \concatenation o} \, r]$;
    \item $\forall a,b \in A \spc [\overline{a \concatenation o} \, r = \overline{b \concatenation o} \, r \to d(a,b) < 2^{-(r - 1)}]$.
  \end{enumerate}

  Let $B(a,r,b,s)$ be a $\Sigma^0_0$ formula expressing
  \begin{equation*}
    2^{-(n + 1)} < r \leq 2^{-n} \wedge \alpha(\overline{a \concatenation o} \, n) > 0 \wedge |\alpha(\overline{a \concatenation o} \, n) - 1 -b| < s.
  \end{equation*}
  Let $\Phi \subseteq \mathbb{N} \times A \times \mathbb{Q}^+ \times \mathbb{N} \times \mathbb{Q}^+$ be defined by $\Sigma^0_0$ comprehension by $(n,a,r,b,s) \in \Phi \leftrightarrow B(n,a,r,b,s)$ and define $(a,r) \Phi (b,s) \defequiv \exists n \spc [(n,a,r,b,s) \in \Phi]$.

  It is straightforward (though tedious) to verify that $\Phi$ is indeed a code for the continuous function $\phi:[n]^{\mathbb{N}}\to\mathbb{N}$.
\end{proof}

\begin{remark}
  The lemma, as stated, doesn't fit the language of $\RCAzero$ since it refers to the third order object $\phi$. However, under the following interpretation it is provable in $\RCAzero$: if $\alpha$ is an associate (i.e., it satisfies condition 1 of the lemma) and $\Phi$ is the code presented in the proof of the lemma, then for all $f : \mathbb{N} \to [n]$ the value on $f$ extracted from $\alpha$ is equal to the value on $f$ extracted from $\Phi$.
\end{remark}

\begin{lemma}
  \label{lemma:uniformContinuity}
  Let $A(f)$ be a $\Sigma^0_0$ formula, $f$ be a set variable and $\ul{x}$ be a tuple of distinguished number variables in $A(f)$.
  \begin{enumerate}
    \item $\RCAzero$ proves $\forall z \spc \exists y \spc \forall f,g : \mathbb{N} \to [n] \spc \big[\bar f y = \bar g y \to \forall \ul{x} \leq z \spc \big(A(f) \leftrightarrow A(g)\big)\big]$.
    \item There exists a $\Sigma^0_0$ formula $B(m)$ such that $\RCAzero$ proves $\forall f : \mathbb{N} \to [n] \spc A(f) \leftrightarrow \forall m \spc B(m)$. In particular, $\forall f : \mathbb{N} \to [n] \spc A(f)$ is equivalent to a $\Pi^0_1$ formula.
    \item There exists a $\Sigma^0_0$ formula $C(m)$ such that $\RCAzero$ proves $\forall f : \mathbb{N} \to [n] \spc [A(f) \leftrightarrow \forall m \spc C(\bar f m)]$.
  \end{enumerate}
\end{lemma}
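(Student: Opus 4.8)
The plan is to isolate a single combinatorial fact --- a ``bounded use'' lemma for $\Sigma^0_0$ formulas --- and derive all three points from it. First I would prove, by external induction on the structure of the $\Sigma^0_0$ formula $A$, that there is a number term $u_A(\ul{w})$ in \emph{all} the free number variables $\ul{w}$ of $A$ (the distinguished $\ul{x}$ together with any further number parameters and $n$) such that $\RCAzero$ proves
\begin{equation*}
  \forall f,g : \mathbb{N} \to [n] \spc \big[\bar f\,(u_A(\ul{w})) = \bar g\,(u_A(\ul{w})) \to \big(A(f) \leftrightarrow A(g)\big)\big].
\end{equation*}
Since $f$ occurs in $A$ only through atomic subformulas $\langle t_1,t_2\rangle \in f$ --- equivalently as terms $f(t)$, with the standing convention that $f$ is a function with values $\le n$ --- a query $f(t)$ is completely determined by $\bar f m$ as soon as $m > t$, and nested occurrences such as $f(f(t))$ are handled by simultaneously carrying along a \emph{value}-bounding term obtained by replacing each subterm $f(\cdot)$ by $n$. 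Connectives take maxima, and a bounded quantifier $\forall i < s(\ul{w})\,A'$ is absorbed by setting $u_A(\ul{w}) \defeq u_{A'}(\ul{w},s(\ul{w}))$; here I use that every number term of the language of $\RCAzero$ is built from $0,1,+,\cdot$ and is therefore monotone in each argument, which is exactly what makes these substitutions legitimate. This bookkeeping --- the interplay of the use-bound term and the value-bound term through (possibly nested) occurrences of $f$ --- is the one place requiring care; I expect it to be the main obstacle, though it is purely routine. Given this lemma, point~1 is immediate: put $y \defeq u_A(z,\dots,z,\vec p)$ with $\vec p$ the parameters; for $\ul{x} \le z$ monotonicity gives $u_A(\ul{x},\vec p) \le y$, so $\bar f y = \bar g y$ forces $\bar f\,(u_A(\ul{x},\vec p)) = \bar g\,(u_A(\ul{x},\vec p))$ and hence $A(f) \leftrightarrow A(g)$, and $\RCAzero$ proves $\exists y$ trivially since $y$ is term-definable from $z$.

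For point~3 I would let $A^*(s)$ be the $\Sigma^0_0$ formula obtained from $A(f)$ by replacing each term $f(t)$ by ``$(s)_t$ if $t < \lh s$, and $0$ otherwise'' (a $\Sigma^0_0$-definable substitution, so $A^*$ stays $\Sigma^0_0$), and then set
\begin{equation*}
  C(s) \defequiv \lh s < u_A(\ul{x},\vec p) \vee A^*(s).
\end{equation*}
If $m < u_A(\ul{x},\vec p)$ then $C(\bar f m)$ holds vacuously via the first disjunct; if $m \ge u_A(\ul{x},\vec p)$ then $A^*(\bar f m) \leftrightarrow A(f)$, because every $f$-query occurring in $A$ uses an argument $< u_A(\ul{x},\vec p) \le m = \lh(\bar f m)$ and $(\bar f m)_i = f(i)$ there (all provable in $\RCAzero$). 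Hence $\RCAzero$ proves $A(f) \leftrightarrow \forall m \spc C(\bar f m)$ by splitting on $m < u_A$ versus $m \ge u_A$ in one direction and instantiating $m \defeq u_A(\ul{x},\vec p)$ in the other.

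For point~2, note that $\{\bar f m : f : \mathbb{N} \to [n]\}$ is exactly the set of codes of length-$m$ sequences all of whose entries are $\le n$, which is $\Sigma^0_0$-definable and, via the standard primitive-recursive bound $b(m)$ on sequence codes, boundedly quantifiable; moreover each such code $s$ equals $\bar f m$ for $f \defeq s \concatenation o$, which $\RCAzero$ can form and which maps into $[n]$ since $0 \le n$. Using the $C$ from point~3, set
\begin{equation*}
  B(m) \defequiv \forall s \le b(m) \spc \big[\big(s \in \Seq \wedge \lh s = m \wedge \forall i < m \spc (s)_i \le n\big) \to C(s)\big],
\end{equation*}
which is $\Sigma^0_0$. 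Then $\RCAzero$ proves $\forall f : \mathbb{N} \to [n] \spc A(f) \leftrightarrow \forall m \spc B(m)$: for ``$\rightarrow$'' one instantiates $f \defeq s \concatenation o$ and invokes point~3, and for ``$\leftarrow$'' one feeds $\bar f m$ into $B(m)$ (its antecedent is met) and again invokes point~3. In particular $\forall f : \mathbb{N} \to [n] \spc A(f)$ is equivalent over $\RCAzero$ to the $\Pi^0_1$ formula $\forall m \spc B(m)$. Thus, once the bounded use lemma is in place, all three statements fall out by the ``make it vacuous below $u_A$'' trick and the ``bound the finite-sequence quantifier'' observation.
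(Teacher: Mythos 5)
Your proposal is correct, and the underlying technique --- external induction on the structure of the $\Sigma^0_0$ formula to bound the use of the set parameter $f$ --- is exactly the paper's. What you do differently is organizational: you factor out a single \emph{term-level} ``bounded-use'' lemma (there is a number term $u_A(\ul{w})$ such that $\bar f(u_A) = \bar g(u_A)$ forces $A(f)\leftrightarrow A(g)$) and derive all three points from it, whereas the paper runs the induction twice --- once in the proof of point~1 to get a bound $y$ existentially quantified over for each $z$, and again, separately, in the proof of point~3 to get the term $t$ with the monotonicity argument --- and proves point~2 by a direct substitution into $A$ (replacing $t\in f$ by a formula that truncates $m\concatenation o$ at $n$). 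Your version avoids the duplicated induction, and your point~2 via a bounded quantifier over length-$m$ sequences with entries $\le n$ is a slightly more mechanical but equivalent route to the paper's ``substitute a $\Sigma^0_0$ read-out for $t\in f$'' construction.

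One small conceptual wrinkle: the ``nested occurrences such as $f(f(t))$'' that you flag as the main place requiring care do not arise. In the language of $\RCAzero$ (as in \cite{Simpson}), $f$ is a set variable, not a function symbol, so it cannot occur inside a number term; its only occurrences are in atomic subformulas $t\in f$ where $t$ is a pure number term in $\ul{x}$ and the parameters. Consequently there is no interplay between a ``use-bound'' term and a ``value-bound'' term to track: the atomic case of your induction is simply $u_{t\in f}=t+1$, and the rest is taken care of by maxima (for connectives) and substitution of the bound term into $u_{A'}$ (for bounded quantifiers), using the monotonicity of $+,\cdot$-terms exactly as you say. This does not affect the correctness of your argument --- you handled a superset of what is needed --- but the ``one place requiring care'' is in fact vacuous.
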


\begin{proof}
  1.~The proof is by induction on the structure of formulas. If $f$ doesn't occur in an atomic formula $A$, then the result is obvious. If it occurs, then $A$ must be of the form $t(\ul{x}) \in f$, i.e., (abbreviating $t(\ul{x})$ by $t$) $\exists i,j \leq t \spc [t = (i,j) \wedge f(i) = j]$. We prove by induction on the structure of the number term $t(\ge i)$ that $\exists w \spc \forall \ul{x} \leq z \spc (t \leq w)$ (for example, if $t \equiv t_1 \cdot t_2$ and by induction hypothesis we have $\exists w_1 \spc \forall \ul{x} \leq z \spc (t_1 \leq w_1)$ and $\exists w_2 \spc \forall \ul{x} \leq z \spc (t_2 \leq w_2)$, then $w \defeq w_1 \cdot w_2$ is such that $\forall \ul{x} \leq z \spc (t \leq w)$). Then $y \defeq w + 1$ works.

  For the negation $\neg A$ of $A$ we take the same $y$ that by induction hypothesis works for $A$. For conjunction $A \wedge B$ we take the maximum of the $y$'s working for $A$ and $B$, and analogously for disjunction, implication and equivalence. For the bounded universal quantifier $\forall i < t \spc A(i)$, by induction hypothesis we have $\forall z \spc \exists y \spc \forall f,g : \mathbb{N} \to [n] \spc \big[\bar fy = \bar g y \to \forall \ul{x},i \leq z \spc \big(A(f,i) \leftrightarrow A(g,i)\big)\big]$. Thus taking $z' = \max (z,t)$ we get an $y$ such that for all $f,g : \mathbb{N} \to [n]$, if $\bar f y = \bar g y$, then for all $\ul{x}\leq z$ we have $\forall i<t \spc [A(f,i) \leftrightarrow A(g,i)]$. Hence $\forall i < t \spc A(f,i) \leftrightarrow \forall i < t \spc A(g,i)$. Argue analogously for the bounded existential quantifier.

  2.~Each occurrence of $f$ in $A(f)$ must be in the form $t \in f$. Let $B(m)$ be the formula obtained from $A(f)$ by replacing each such occurrence $t \in f$ by the $\Sigma^0_0$ formula
  \begin{equation*}
    \begin{split} 
      C(m,t,n) \defequiv m \in \Seq \wedge \exists i,j \leq t \hspace*{7.3cm} \\
      \Big[t = (i,j) \wedge \Big(i < \lh m \to j = \min\big(n,m(i)\big)\Big) \wedge (i \geq \lh m \to j = 0)\Big],
    \end{split}
  \end{equation*}
  where $[ \ldots ]$ expresses that $j =  \min\big(n,(m \concatenation o)(i)\big)$. Then $B(m)$ is a $\Sigma^0_0$ formula. Let us prove $\forall f : \mathbb{N} \to [n] \spc A(f) \leftrightarrow \forall m \spc B(m)$.

  `$\rightarrow$':~Take any $m \in \Seq$. We define $f : \mathbb{N} \to [n]$ by $f(i) \defeq \min\big(n,m(i)\big)$ if $i < \lh m$ and $f(i) \defeq 0$ if $i \geq \lh m$. Then $t \in f \leftrightarrow C(m,t,n)$, so $A(f) \leftrightarrow B(m)$.

  `$\leftarrow$':~Take any $f : \mathbb{N} \to [n]$. By point 1, let $y$ be such that $\forall g : \mathbb{N} \to [n] \spc \big[\bar f y = \bar g y \to \big(A(f) \leftrightarrow A(g)\big)\big]$. In particular, taking $g = \bar f y \concatenation o$ we get $A(f) \leftrightarrow A(\bar f y \concatenation o)$. Let $m \defeq \bar f y$. Then $t \in \bar f y \concatenation o \leftrightarrow C(m,t,n)$, so $B(m) \leftrightarrow A(\bar f y \concatenation o) \leftrightarrow A(f)$.

  3.~First we easily show, by induction on the structure of the term $t(i)$, that $\RCAzero$ proves $i \leq q \to t(i) \leq t(q)$.

  Let $A'(a)$ be the formula obtained from $A(f)$ by (adding the assumption $a\in Seq$ and) replacing each instance of $q \in f$ by $\exists i,j \leq q \spc [q = (i,j) \wedge a(i) = j]$. We show, by induction on the structure of $A(f)$, that there exists a term $t$ such that $\RCAzero$ proves $\forall f : \mathbb{N} \to [n] \spc \big[m \geq t \to \big(A(f) \leftrightarrow A'(\bar f m)\big)\big]$. For an atomic formula $A$, if $f$ doesn't occur, then the result is obvious; if $f$ occurs in $A$, then $A$ is of the form $q \in f$, that is equivalent to $\exists i,j \leq q \spc [q = (i,j) \wedge f(i) = j]$, so $t \defeq q + 1$ works. For $\neg A$ we take the same $t$ that works for $A$. For $A \wedge B$, $A \vee B$, $A \to B$ and $A \leftrightarrow B$ we take the sum of the $t$'s working for $A$ and $B$. For $\forall i < q \spc A(i)$ we have by induction hypothesis a term $t'(i)$ that works for $A(i)$, so using the previous paragraph we see that the term $t(q)$ works for $\forall i < q \spc A(i)$. Argue, analogously for $\exists i < q \spc A(i)$.

  Finally, using the previous paragraph we easily see that $C(a) \defequiv m \geq t \to A'(a)$ works.
\end{proof}

\begin{lemma}
  \label{lemma:limitUniquenessAndStabilityPoint}
  $\RCAzero$ proves the following.
  \begin{enumerate}
    \item For all $F \in \ASNIS$ and for all infinite sets $A$, there exists a $c$ such that for all sequences $(l_m)$ weakly convergent to $A$, we have 
$\exists i \spc \forall j \geq i \spc [F(l_j) = c]$.
    \item For all $F \in \ASNIS$ and for all infinite sets $A$, there exist $c$ and $d$ such that $\forall l \in \Seq \spc [A_l \cap [d] = A \cap [d] \to F(l) = c]$.
  \end{enumerate}
\end{lemma}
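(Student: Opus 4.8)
The plan is to prove point~1 first and then obtain point~2 as a quick consequence.

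For point~1 I would first exhibit one sequence weakly convergent to $A$ and read off the value that $F$ eventually attains on it. Since $A$ is infinite, $\RCAzero$ proves that for each $m$ the finite set $A\cap[m-1]$ has a code $p_m$ that can be chosen uniformly in $m$, and then $(p_m)$ is a nested sequence with union $A$; it weakly converges to $A$, since for any finite $B$, once $m$ exceeds a bound for $B$ we have $A_{p_m}\cap B=(A\cap[m-1])\cap B=A\cap B$, and $A$ is infinite. As $F\in\ASNIS$ there is an $i$ with $F(p_j)=F(p_i)$ for all $j\geq i$; put $c\defeq F(p_i)$. To see that this $c$ works, let $(l_m)$ be an arbitrary sequence weakly convergent to $A$; again by $F\in\ASNIS$ it has an eventually attained value $c'$, and it suffices to prove $c'=c$. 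For this I would interleave: form the sequence $(h_m)$ with $h_{2m}\defeq p_m$ and $h_{2m+1}\defeq l_m$, which lies in $\Seq$ and is definable in $\RCAzero$ from $(p_m)$ and $(l_m)$. One checks directly that $(h_m)$ weakly converges to $A$: given a finite $B$, pick an index past which both $(p_m)$ and $(l_m)$ have stabilized on $B$, and observe that every sufficiently late term of $(h_m)$ is a sufficiently late term of $(p_m)$ or of $(l_m)$. Hence $F$ is eventually constant on $(h_m)$; inspecting the even indices this constant must equal $c$, and inspecting the odd indices it must equal $c'$, so $c'=c$.

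For point~2 I would argue by contradiction using the value $c$ supplied by point~1. If no $d$ works, then for every $d$ there is some $l\in\Seq$ with $A_l\cap[d]=A\cap[d]$ and $F(l)\neq c$. The matrix of this statement is $\Sigma^0_0$, since $l\in\Seq$, $A_l\cap[d]=A\cap[d]$ (a bounded statement about membership in $A_l$, $A$ and $[d]$) and $F(l)\neq c$ are all $\Sigma^0_0$ in the given parameters; so, using the $\Sigma^0_0$ least number principle and $\Delta^0_1$ comprehension available in $\RCAzero$, there is a sequence $(l_d)$ with $l_d\in\Seq$, $A_{l_d}\cap[d]=A\cap[d]$ and $F(l_d)\neq c$ for every $d$. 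This $(l_d)$ weakly converges to $A$: if $B$ is finite with $B\subseteq[b]$, then for all $d\geq b$ we have $A_{l_d}\cap B=(A_{l_d}\cap[d])\cap B=(A\cap[d])\cap B=A\cap B$. But then point~1 gives an $i$ with $F(l_j)=c$ for all $j\geq i$, contradicting $F(l_d)\neq c$ for all $d$. Hence some $d$ works, and this $d$ together with $c$ proves point~2.

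I expect the only delicate part to be the bookkeeping required to carry this out over $\RCAzero$: checking that $A\cap[m-1]$ is uniformly coded, that the interleaved sequence $(h_m)$ exists, and that the sequence of ``bad witnesses'' $(l_d)$ exists. The mathematical ideas themselves --- interleaving to force uniqueness of the stable value, and the observation that the bad witnesses themselves weakly converge to $A$ --- are short.
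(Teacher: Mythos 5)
Your proposal is correct and follows essentially the same route as the paper: for point~1, exhibit the canonical sequence coding $A\cap[m]$ to pin down $c$, then interleave it with an arbitrary weakly convergent sequence and use that $F\in\ASNIS$ forces the interleaved sequence's eventual value to agree with both, and for point~2, argue by contradiction, minimize over the $\Sigma^0_0$ matrix to extract a sequence of bad witnesses, and observe that it weakly converges to $A$ while $F$ never hits $c$ on it. No meaningful differences in the decomposition or key steps.
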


\begin{proof}
  1.~Let $F \in \ASNIS$ and let $A$ be an infinite set. First we define $c$. We define by primitive recursion a sequence $(l_m)$ where each $l_m$ is such that $A_{l_m} = A \cap [m]$. Clearly $(l_m)$ weakly converges to $A$, so since $F \in \ASNIS$ we have $\exists h \spc \forall j \geq h \spc [F(l_h) 
= F(l_j)]$. Let $c \defeq F(l_h)$.

  Consider an arbitrary sequence $(l'_m)$ weakly converging to $A$. Let us define $i$. We have $\exists h' \spc \forall j \geq h' \spc [F(l'_{h'}) = F(l'_j)]$. By primitive recursion define the sequence $(l''_m)$ by
  \begin{equation*}
    l''_m \defeq
    \begin{cases}
      l_{m/2}         & \text{if $m$ is even,} \\
      l'_{(m - 1)/2}  & \text{if $m$ is odd.}
    \end{cases}
  \end{equation*}
  Since both $(l_m)$ and $(l'_m)$ weakly converge to $A$, then also $(l''_m)$ weakly converges to $A$, and so since $F \in \ASNIS$ we have $\exists h'' \spc \forall j \geq h'' \spc [F(l''_{h''}) = F(l''_j)]$. Let $i \defeq \max (2h,2h',h'')$.

  It remains to prove $\forall j \geq i \spc [F(l'_j) = c]$. Since $i \geq h'$, and so $\forall j \geq i \spc [F(l'_{h'}) = F(l'_j)]$, it is enough to prove $F(l'_{h'}) = c$. So take an even $j \geq i$. Then $l''_j = l_{j/2}$ and $l''_{j + 1} = l'_{j/2}$. Since $j \geq 2h$ and $j \geq 2h'$, we have $j/2 \geq h$ and $j/2 \geq h'$. Thus $F(l''_j) = F(l_{j/2}) = F(l_h) = c$ and $F(l''_{j + 1}) = F(l'_{j/2}) = F(l'_{h'})$. But $F(l''_j) = F(l''_{h''}) = F(l''_{j + 1})$ since $j \geq h''$. We conclude that $F(l'_{h'}) = c$.

  2.~Let $F \in \ASNIS$ and, by contradiction, let us assume that $A$ is an infinite set such that for all $c$ and $d$ there exists an $l \in \Seq$ such that $A_l \cap [d] =  A\cap [d] \wedge F(l) \neq c$. Notice that the latter formula is equivalent to a $\Sigma^0_0$ formula. Let $c$ be the number given by the previous point. We define a sequence $(l_m)$ by $l_m \defeq \min l \spc \big[l \in \Seq \wedge A_l \cap [m] = A \cap [m] \wedge F(l) \neq c\big]$ so that $\forall m \spc \big[A_{l_m} \cap [m] = A \cap [m] \wedge F(l_m) \neq c\big]$. Then $(l_m)$ weakly converges to $A$ while $\forall m \spc [F(l_m) \neq c]$, contradicting point 1.
\end{proof}

\section{Counterexample to $\IPP \leftrightarrow \FIPPone$}

\label{section-counterexample}
In this section we give a counterexample to $\FIPPone$. In particular, $\FIPPone$ is not equivalent to the true $\IPP$.

\begin{theorem}
  $\RCAzero$ refutes $\FIPPone$.
\end{theorem}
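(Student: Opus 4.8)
The plan is to refute $\FIPPone$ by exhibiting, once and for all, a number $n$, a function $F:\mathbb{N}\to\mathbb{N}$ with $F\in AS$, and for each $k$ a colouring $f_k:[k]\to[n]$ such that no colour class $f_k^{-1}(c)$ is ``big'', i.e.\ $|f_k^{-1}(c)|\leq F(f_k^{-1}(c))$ for every $c\in[n]$; unwinding the quantifiers, this is exactly $\neg\FIPPone$. Everything has to be arranged so that the verification goes through in $\RCAzero$: $F$ will be a $\Delta^0_1$ function (hence a legitimate object of $\RCAzero$) which is extensional, the colourings $f_k$ will be given by an explicit uniform rule, and the statements ``$F\in AS$'' and ``each $f_k$ works'' will be proved by elementary means. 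Concretely one takes $F(l):=|A_l|$ when $A_l$ is one of the designated colour classes and $F(l):=0$ otherwise, and checks that membership of $A_l$ in that family is decidable (so $F$ is $\Delta^0_1$) and that $F$ is extensional, or passes to minimal codes.

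The shape of the construction is forced by the following tension. For any colouring of $[k]$ into $n+1$ colours some colour class has at least $(k+1)/(n+1)$ elements, so along the $f_k$ the colour classes have \emph{unbounded} cardinality, whence $F$ must take unboundedly large values on them. But $F\in AS$ forces $F$ to be eventually constant along every nested sequence $A_{l_0}\subseteq A_{l_1}\subseteq\cdots$ \emph{whose union exists as a set}, and one cannot escape this by retreating to a weak model: since the colourings are uniform, the family of colour classes and any nested chain inside it are $\Delta^0_1$, and the union of such a chain, if infinite, is in particular a set in any $\omega$-model (indeed in $\ACAzero$), on which $F$ would then have to stabilise --- impossible if there $F\geq|\,\cdot\,|$ on the chain. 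Hence the family $\mathcal{F}$ of designated colour classes must itself contain \emph{no} infinite properly ascending chain. What makes this compatible with partitioning each $[k]$ into only $n+1$ members of $\mathcal{F}$ is that $AS$, unlike $\ASNIS$, says nothing about sequences that merely \emph{weakly converge}: a sequence of finite sets weakly converging to an infinite set need \emph{not} contain an infinite nested subchain, provided its terms carry ``noise'' lying outside the limit. So one natural way to build the $f_k$ is a delayed-stabilisation scheme: each position $i$ acquires its final colour only at a stage $g(i)$ pushed far beyond $i$, carrying a provisional (``wrong'') colour in the meantime, and the still-unstable positions near $k$ supply precisely the noise that keeps the colour classes from nesting while the classes still weakly converge.

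For each $k$ one then verifies by inspection that every colour class $f_k^{-1}(0),\dots,f_k^{-1}(n)$ lies in $\mathcal{F}$, so $F$ of each equals its own cardinality and the class is not big; this already gives $\forall k\,\exists f\,[\dots]$ in $\RCAzero$. The main obstacle is the proof that $F\in AS$ over $\RCAzero$: if $(l_m)$ is a nested sequence whose union is a set then $F(l_m)$ must be shown eventually constant, and since $F$ vanishes off $\mathcal{F}$ and $|A_{l_m}|$ is non-decreasing this reduces to showing that $\mathcal{F}$ contains no infinite properly ascending chain (if it did, the alternative --- $|A_{l_m}|$ bounded but $F$ oscillating between $0$ and that bound --- is killed by the fact that two comparable sets of the same size are equal). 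So the combinatorial/recursion-theoretic heart is to choose $g$ and the final colouring simultaneously so that (i) for every $k$ the set $[k]$ is genuinely covered by $n+1$ of these classes and (ii) along the $f_k$ no infinite ascending chain of colour classes ever forms --- this is where the delayed-stabilisation parameters must be tuned, and it is the step I expect to be delicate, since naive choices (e.g.\ letting a class converge monotonically to a fixed set) reintroduce a nested chain with an existing union and break $F\in AS$. Once (i), (ii), extensionality and $\Delta^0_1$-ness are in place, $\RCAzero$ proves $\neg\FIPPone$. As a by-product $F\notin\ASNIS$ --- it fails to stabilise on the weakly convergent sequences formed by the colour classes --- which is exactly as it must be, since $\FIPPthree$ (the $\ASNIS$-variant) is true.
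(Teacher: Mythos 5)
Your overall strategy --- exhibit $n$, an asymptotically stable $F$, and a uniform family of colourings $f_k$ on which every colour class $A$ satisfies $|A|\leq F(A)$ --- is exactly right, and your structural analysis of why this is possible (unbounded colour classes versus stabilisation along nested chains, escaped by exploiting that $AS$ is silent about merely weakly convergent sequences) is sound. But the proposal stops short of a proof: the colourings, the delay function $g$, the verification that the resulting family $\mathcal{F}$ of designated colour classes contains no infinite ascending chain, and the check that membership in $\mathcal{F}$ is decidable (so that $F$ is a legitimate $\Delta^0_1$ object of $\RCAzero$) are all left as ``delicate tuning to be done.'' These are precisely the hard parts, and you acknowledge as much.

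The difficulty is self-inflicted by the choice $F(l):=|A_l|$ on $\mathcal{F}$ and $0$ elsewhere: with that $F$, proving $F\in AS$ genuinely requires a chain-finiteness property of $\mathcal{F}$ that must be engineered into the colourings, plus a decidability argument. The paper moves the combinatorial burden out of the colourings and into the definition of $F$. With $n=1$ it takes $F(l):=\min(A_l\cap\{\text{odds}\})+\min(A_l\cap\{\text{evens}\})+2$ (with $\min\emptyset:=0$). This $F$ is manifestly primitive recursive and extensional, and $F\in AS$ is immediate for \emph{every} nested chain: along a nested chain each of the two minima is non-increasing from some point on, hence eventually constant --- no chain condition on any designated family is needed. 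The colourings can then be almost anything that inflates $F$ on colour classes: colour $[k]$ by parity (odds to colour $0$, evens to colour $1$) but swap the colours of the top two elements $k-1,k$. Each colour class then contains a low element of one parity and a high element (near $k$) of the other parity, so $F\geq k+1=|[k]|\geq|A|$, and $\FIPPone$ fails. So the key idea missing from your plan is this shift of difficulty: pick an $F$ that is asymptotically stable \emph{by construction} (a sum of minima, not a size function restricted to a carefully chain-free family), and then the colourings reduce to a parity colouring with a two-element twist at the top.
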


\begin{proof}
  We define $F \in AS$, $n$ and a sequence of colorings $f_k : [k] \to [n]$.

  We take $n \defeq 1$. Let us write $\dot{m}$ to mean that the number $m$ was given the color $0$ and $\ddot{m}$ to mean that it was given the color $1$.

  Let $\mathbb{O} \defeq \{1,3,5,\ldots\}$ be the set of the odd natural numbers and $\mathbb{E} \defeq \{0,2,4,\ldots\}$ be the set of the even natural numbers. Let us make the (non-standard) convention $\min \emptyset \defeq 0$. We define
  \begin{equation*}
    \begin{split}
      F : \mathbb{N} &\to \mathbb{N} \\
      l              &\mapsto
      \begin{cases}
        \min (A_l \cap \mathbb{O}) + \min (A_l \cap \mathbb{E}) + 2 & \text{if } l \in \Seq, \\
        0                                                           & \text{otherwise.}
      \end{cases}
    \end{split}
  \end{equation*}
  Let us prove $F \in AS$. Clearly $F$ is extensional. Consider a nested sequence with union $(l_m)$. Then we have a nested sequence $A_{l_0} \cap \mathbb{O} \subseteq A_{l_1} \cap \mathbb{O} \subseteq A_{l_2} \cap \mathbb{O} \subseteq \cdots$. So eventually the numbers $\min (A_{l_m} \cap \mathbb{O})$ will become constant. In an analogous way, eventually the numbers $\min (A_{l_m} \cap \mathbb{E})$ will become constant. So $F(l_m)$ will eventually become constant.

  We color each set $[k]$ in the following way:
  \begin{enumerate}
    \item the odd numbers are given the color $0$ and the even numbers are given the color $1$;
    \item except for the last two numbers $k - 1$ and $k$, where the odd number is given the color $1$ and the even number is given the color $0$.
  \end{enumerate}
  In the cases of $k = 0$ and $k = 1$, i.e., in the cases of the sets $[0]$ and $[1]$, we consider that $0$ and $1$ are the last two numbers, so we apply the second rule to them.

  Let us write the colored sets $[0],[1],[2],\ldots$ and, on the left of each set, the value of $F$ over the $0$- and $1$-color classes:
  \begin{equation*}
    \begin{array}{ccllllllllllllll}
      \dot{2}  & \ddot{2}  &&& \{ & \dot{0}   & \}        &           &           &           &           &          &          &        \\
      \dot{2}  & \ddot{3}  &&& \{ & \dot{0},  & \ddot{1}  & \}        &           &           &           &          &          &        \\
      \dot{4}  & \ddot{3}  &&& \{ & \ddot{0}, & \ddot{1}, & \dot{2}   & \}        &           &           &          &          &        \\
      \dot{5}  & \ddot{5}  &&& \{ & \ddot{0}, & \dot{1},  & \dot{2},  & \ddot{3}  & \}        &           &          &          &        \\
      \dot{7}  & \ddot{5}  &&& \{ & \ddot{0}, & \dot{1},  & \ddot{2}, & \ddot{3}, & \dot{4}   & \}        &          &          &        \\
      \dot{7}  & \ddot{7}  &&& \{ & \ddot{0}, & \dot{1},  & \ddot{2}, & \dot{3},  & \dot{4},  & \ddot{5}  & \}       &          &        \\
      \dot{9}  & \ddot{7}  &&& \{ & \ddot{0}, & \dot{1},  & \ddot{2}, & \dot{3},  & \ddot{4}, & \ddot{5}, & \dot{6}  & \}       &        \\
      \dot{9}  & \ddot{9}  &&& \{ & \ddot{0}, & \dot{1},  & \ddot{2}, & \dot{3},  & \ddot{4}, & \dot{5},  & \dot{6}, & \ddot{7} & \}     \\
      \vdots   & \vdots    &&&    & \vdots    & \vdots    & \vdots    & \vdots    & \vdots    & \vdots    & \vdots   & \vdots   & \ddots \\
    \end{array}
  \end{equation*}
  Notice that the cardinality of any $f_k$-color class is less than or equal to $|[k]| = k + 1$ which in turn is less than or equal to the value of $F$ over (a code for) that color class. So we have $\forall k \spc \forall l \in \Seq \spc \forall c \in [1] \spc [A_l = (f_k)^{-1}(c) \to |A_l| \leq F(l)]$, which falsifies $\FIPPone$.
\end{proof}

\section{Proofs of $\FIPPtwo \to \IPP$ and $\FIPPthree \to \IPP$}

In this section we give proofs in $\RCAzero$ of the implications $\FIPPtwo \to \IPP$ and $\FIPPthree \to \IPP$. Latter we study the reverse implications.

\begin{theorem} \label{forward} \mbox{}
  \begin{enumerate}
    \item $\RCAzero$ proves $\FIPPtwo \to \IPP$.
    \item $\RCAzero$ proves $\FIPPthree \to \IPP$.
  \end{enumerate}
\end{theorem}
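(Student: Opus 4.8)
The plan is to establish both implications by a single argument by contradiction. I would assume $\neg\IPP$ and fix $n$ together with a coloring $f:\mathbb{N}\to[n]$ all of whose color classes $f^{-1}(c)$ ($c\in[n]$) are finite. Call a finite set $A$ \emph{monochromatic} if $f$ is constant on $A$; note that ``$A_l$ is monochromatic'' abbreviates the $\Sigma^0_0$ formula $\forall i,j<\lh l\,\big(f(l(i))=f(l(j))\big)$. Define $F:\mathbb{N}\to\mathbb{N}$ by $F(l)\defeq|A_l|$ if $l\in\Seq$ and $A_l$ is monochromatic, and $F(l)\defeq 0$ otherwise. By point~2 of lemma~\ref{lemma:cardinality} the relation ``$|A_l|=y$'' is $\Sigma^0_0$, so the graph of $F$ is $\Sigma^0_0$ and $F$ exists in $\RCAzero$; it is clearly extensional. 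The point of this definition is that $F(l)=|A_l|$ whenever $A_l$ is monochromatic, so \emph{no} monochromatic $A_l$ can satisfy $|A_l|>F(l)$; this will contradict the conclusions of $\FIPPtwo$ and $\FIPPthree$ once we know $F$ belongs to the relevant class.

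The core step will be to verify $F\in\ASNIS$ (which, by the remark following definition~\ref{def:asymptoticallyStable}, also yields $F\in AS$). The only nonroutine ingredient is the fact, provable in $\RCAzero$, that a subset of a finite set is finite: each $f^{-1}(c)$ has a code (point~1 of lemma~\ref{lemma:cardinality}), so a monochromatic set is contained in some $f^{-1}(c)$, hence bounded, hence finite; in particular no infinite set is monochromatic. Now let $(l_m)$ weakly converge, witnessed by an infinite set $A$ (point~2 of definition~\ref{def:asymptoticallyStable}). Since $A$ is infinite it is not monochromatic, so there are $a,b\in A$ with $f(a)\neq f(b)$. Applying weak convergence to the finite set $B\defeq\{a,b\}$ yields an $i$ with $A_{l_j}\cap B=A\cap B=\{a,b\}$ for all $j\ge i$; hence $a,b\in A_{l_j}$, so $A_{l_j}$ is not monochromatic and $F(l_j)=0$, for every $j\ge i$. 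Thus $F$ stabilizes along $(l_m)$, and since $(l_m)$ was arbitrary, $F\in\ASNIS$.

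It then remains to extract the contradictions. For part~1, I apply $\FIPPtwo$ to $n$ and $F\in AS$ to obtain a $k$, and instantiate the universally quantified coloring of $[k]$ by the restriction $f|_{[k]}:[k]\to[n]$; this produces an $l\in\Seq$ with $A_l\subseteq[k]$, with $f|_{[k]}$ constant on $A_l$, and with $|A_l|>F(l)$. Since $A_l\subseteq[k]$, $f$ itself is constant on $A_l$, so $A_l$ is monochromatic and $F(l)=|A_l|$, whence $|A_l|>|A_l|$, a contradiction. For part~2, I apply $\FIPPthree$ to $n$ and $F\in\ASNIS$ to obtain a $k$, instantiate the coloring by $f|_{[k]}$, and obtain $l\in\Seq$ and $c\in[n]$ with $A_l=(f|_{[k]})^{-1}(c)$ and $|A_l|>F(l)$. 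Every element of $A_l$ has $f$-color $c$, so $A_l$ is monochromatic and again $F(l)=|A_l|$, contradicting $|A_l|>F(l)$. In either case $\neg\IPP$ is refuted, giving $\FIPPtwo\to\IPP$ and $\FIPPthree\to\IPP$ over $\RCAzero$.

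The conceptual content here is light; the only part demanding care is staying inside $\RCAzero$ --- namely, checking that $F$ is genuinely definable there (this is precisely where point~2 of lemma~\ref{lemma:cardinality} and the $\Sigma^0_0$-ness of monochromaticity are needed), that ``a subset of a finite set is finite'' is available without additional comprehension, and that the verification of $F\in\ASNIS$ draws only on the data supplied by the definition of weak convergence, so that no hidden use of collection or of induction beyond $\Sigma^0_1$ creeps in.
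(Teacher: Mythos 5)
Your proposal is correct and takes essentially the same route as the paper: the function you define is pointwise identical to the paper's $F(l)=\min m\,(|A_l|\le m\vee f|_{A_l}\text{ not constant})$, and the contradiction is extracted in the same way. The only (cosmetic) difference is that you verify $F\in\ASNIS$ once and deduce $F\in AS$ from the inclusion $\ASNIS\subseteq AS$, whereas the paper proves $F\in AS$ directly by a finite/infinite-union case split using $\BSigmaOne$ --- the same $\BSigmaOne$ that underlies the inclusion you invoke.
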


\begin{proof}
  1.~$\FIPPtwo$ implies
  \begin{equation}
    \label{eq:weakFIPP}
    \forall n \spc \forall F \in AS \spc \forall f : \mathbb{N} \to [n] \spc \underbrace{{} \exists k \spc \exists l \in \Seq \spc \big[A_l \subseteq [k] \wedge |A_l| > F(l) \wedge f|_{A_l} \text{ constant}\big]}_{\equivdef B(F,f)}.
  \end{equation}
  Assume $\neg \IPP$. Then there exists $n$ and $f : \mathbb{N} \to [n]$ such that
  \begin{equation}
    \label{eq:AfiniteOrfNotConstant}
    \forall A \spc \exists m \spc (|A| \leq m \vee f|_A \text{ not constant}).
  \end{equation}
  If $A$ is given by a code $l$, then by point 2 of lemma \ref{lemma:cardinality} the formula ``$|A_l| \leq m \vee f|_{A_l} \text{ not constant}$'' is equivalent to some $\Sigma^0_0$ formula (using that the image of $f$ is in $[n]$), thus by primitive recursion we can define the function
  \begin{equation*}
    \begin{split}
      F : \mathbb{N} &\to \mathbb{N} \\
      l              &\mapsto
      \begin{cases}
        \min m \spc (|A_l| \leq m \vee f|_{A_l} \text{ not constant}) & \text{if } l \in \Seq, \\
        0                                                             & \text{otherwise.}
      \end{cases}
    \end{split}
  \end{equation*}

  Let us prove $F \in AS$. Take any nested sequence $(l_m)$ with union $A \defeq \bigcup_m A_{l_m}$. If $A$ is finite, then $\exists i \spc \forall j \geq i \spc (A_{l_i} = A_{l_j})$ (using $\BSigmaOne$ which is derivable in $\RCAzero$). Hence $F$ eventually stabilizes over $(l_m)$. If $A$ is infinite, then by (\ref{eq:AfiniteOrfNotConstant}) we have $\exists x,y \in A \spc [x \neq y \wedge f(x) \neq f(y)]$ and $\exists i \spc (x,y \in A_{l_i})$ which yields $\forall j \geq i \spc (x,y \in A_{l_j})$. Thus $\forall j \geq i \spc [F(l_i) = 0 = F(l_j)]$. This concludes the proof of $F \in AS$.

  So we have found $n$, $F \in AS$ and $f : \mathbb{N} \to [n]$ such that $\neg B(F,f)$, contradicting (\ref{eq:weakFIPP}).

  2.~The proof is analogous to the proof of point 1, except for the argument that $F \in \ASNIS$. Let us prove $F \in \ASNIS$. Take any sequence $(l_m)$ weakly convergent to an infinite set $A$. By (\ref{eq:AfiniteOrfNotConstant}) we get $\exists x,y \in A \spc [x \neq y \wedge f(x) \neq f(y)]$. Let $z \defeq \max(x,y)$. Since $(l_m)$ is weakly convergent to $A$ we have $\exists i \spc \forall j \geq i \spc (A_{l_j} \cap [z] = A \cap [z])$. But $x,y \in A \cap [z]$ and so $\forall j \geq i \spc (x,y \in A_{l_j})$, thus $\forall j \geq i \spc [F(l_i) = 0 = F(l_j)]$.
\end{proof}

Together with proposition \ref{Hirst} we get

\begin{corollary}
  $\WKLzero$ does not prove $\FIPPtwo$. Also $\WKLzero$ does not prove $\FIPPthree$.
\end{corollary}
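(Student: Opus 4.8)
The plan is to derive this corollary immediately from the two ingredients already at hand: the forward implications of theorem \ref{forward} and Hirst's proposition \ref{Hirst}. Since both $\FIPPtwo \to \IPP$ and $\FIPPthree \to \IPP$ are established already over $\RCAzero$, and $\RCAzero$ is a subsystem of $\WKLzero$, these implications are in particular available in $\WKLzero$. Hence if $\WKLzero$ proved $\FIPPtwo$ it would also prove $\IPP$, and likewise if $\WKLzero$ proved $\FIPPthree$ it would prove $\IPP$.

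First I would argue by contraposition for $\FIPPtwo$: assume $\WKLzero \vdash \FIPPtwo$. By point 1 of theorem \ref{forward} we have $\RCAzero \vdash \FIPPtwo \to \IPP$, so a fortiori $\WKLzero \vdash \FIPPtwo \to \IPP$, and combining the two gives $\WKLzero \vdash \IPP$. This contradicts proposition \ref{Hirst}. Therefore $\WKLzero$ does not prove $\FIPPtwo$.

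Then I would repeat the same reasoning verbatim with $\FIPPthree$ in place of $\FIPPtwo$, now invoking point 2 of theorem \ref{forward} instead of point 1: if $\WKLzero \vdash \FIPPthree$ then $\WKLzero \vdash \IPP$, again contradicting proposition \ref{Hirst}.

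There is really no obstacle here: the corollary is a bookkeeping consequence of results proved earlier in the section, and the only thing to be careful about is noting explicitly that theorem \ref{forward} is stated over $\RCAzero$ and hence transfers upward to $\WKLzero$. No additional combinatorial or proof-theoretic content is needed.

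\begin{proof}
  Suppose, towards a contradiction, that $\WKLzero$ proves $\FIPPtwo$. By point 1 of theorem \ref{forward}, $\RCAzero$ proves $\FIPPtwo \to \IPP$, and since $\RCAzero$ is a subsystem of $\WKLzero$, also $\WKLzero$ proves $\FIPPtwo \to \IPP$. Hence $\WKLzero$ proves $\IPP$, contradicting proposition \ref{Hirst}. Therefore $\WKLzero$ does not prove $\FIPPtwo$.

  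The argument for $\FIPPthree$ is identical, using point 2 of theorem \ref{forward}: if $\WKLzero$ proved $\FIPPthree$, then together with the implication $\FIPPthree \to \IPP$ (provable already in $\RCAzero$, hence in $\WKLzero$) we would obtain $\WKLzero \vdash \IPP$, again contradicting proposition \ref{Hirst}. Thus $\WKLzero$ does not prove $\FIPPthree$.
\end{proof}
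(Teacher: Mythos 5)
Your proof is correct and is exactly the argument the paper intends: combine the $\RCAzero$-provable implications $\FIPPtwo \to \IPP$ and $\FIPPthree \to \IPP$ from Theorem \ref{forward} with Proposition \ref{Hirst} ($\WKLzero \nvdash \IPP$) to conclude by contraposition. The paper states this corollary with only the remark ``Together with proposition \ref{Hirst} we get,'' so your write-up is just an explicit spelling-out of the same bookkeeping step.
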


\section{Continuous uniform boundedness}

In definition \ref{def:CUB} we will define a predicate $\cont(A)$ that, in particular, expresses the continuity of the functional
\begin{equation*}
  \phi : [n]^\mathbb{N} \to \mathbb{N}, \quad f \mapsto \min x \spc [A(f,x)].
\end{equation*}
Then we define a compactness principle $\CUB$ that roughly speaking expresses that if $\phi$ is continuous and total, then it is bounded on the compact $[n]^\mathbb{N}$. There is also a variant $\CUBprime$ that emphasizes that the conclusion only talks about an initial segment of $f$. However, it turns out that for the instances of $\CUB$ and $\CUBprime$ in which we are interested, the two principles are equivalent, as we show in proposition \ref{prop:CUBequivCUBprime}. In proposition \ref{prop:SigmaZeroEquivSigmaOne} we show that $\SigmaZeroCUB$ and $\SigmaZeroCUBprime$ can be upgraded to $\SigmaOneCUB$ and $\SigmaOneCUBprime$. In theorem \ref{theorem:CUBBigFive} we calibrate the strength of $\Phi$-$\CUB$ in terms of the ``big five'' subsystems of second order arithmetic.

\begin{definition}
  \label{def:CUB}
  The following definition is made within $\RCAzero$.
  \begin{enumerate}
    \item Let $A(f,\ul{x})$ be a formula with (among others) a distinguished set variable $f$ and a distinguished tuple of number variables $\ul{x}$. We say that $A$ is \emph{continuous} (w.r.t.\ $f,\ul{x}$), and write $\cont(A)$ (more precisely $\cont(A(f,\ul{x}))$), if and only if
      \begin{equation*}
        \forall f : \mathbb{N} \to [n] \spc \forall z \spc \exists y \spc \forall g : \mathbb{N} \to [n] \spc \big[\bar f y = \bar g y \to \forall \ul{x} \leq z \big(A(f,\ul{x}) \leftrightarrow A(g,\ul{x})\big)\big],
      \end{equation*}
      where the variable $n$ doesn't occur free in $A$.
  \item The \emph{continuous uniform boundedness principle} $\CUB$ is the schema
    \begin{equation*}
      \forall n \spc \big[\big(\cont(A) \wedge \forall f : \mathbb{N} \to [n] \spc \exists \ul{x} \spc A(f,\ul{x})\big) \to \exists z \spc \forall f : \mathbb{N} \to [n] \spc \exists \ul{x} \leq z \spc A(f,\ul{x})\big].
    \end{equation*}
    We denote by $\Gamma$-$\CUB$ the restriction of $\CUB$ to formulas $A(f,\ul{x})$ in $\Gamma$.
  \item The \emph{variant continuous uniform boundedness principle} $\CUBprime$ is the schema
    \begin{equation*}
      \begin{split}
       \forall n \spc \big[\big(\cont(A) \wedge \forall f : \mathbb{N} \to [n] \spc \exists \ul{x} \spc A(f,\ul{x})\big) \to & \\
        \exists z \spc \forall f : \mathbb{N} \to [n] \spc \exists \ul{x} \leq z \spc \big( A(f,\ul{x}) \wedge \forall g &: \mathbb{N} \to [n] \spc \big(\bar f z = \bar g z \to A(g,\ul{x})\big)\big)\big].
      \end{split}
    \end{equation*}
    We denote by $\Gamma$-$\CUBprime$ the restriction of $\CUBprime$ to formulas $A(f,\ul{x})$ in $\Gamma$.
  \end{enumerate}
\end{definition}

\begin{proposition}
  \label{prop:SigmaZeroEquivSigmaOne}
  $\RCAzero$ proves the following.
  \begin{enumerate}
    \item $\SigmaZeroCUB \leftrightarrow \SigmaOneCUB$.
    \item $\SigmaZeroCUBprime \leftrightarrow \SigmaOneCUBprime$.
  \end{enumerate}
\end{proposition}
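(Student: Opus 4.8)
The plan is to prove both equivalences by a single device: absorbing the leading (block of) existential number quantifier(s) of a $\Sigma^0_1$ formula into its distinguished tuple of number variables, which turns an instance of $\SigmaOneCUB$ into an instance of $\SigmaZeroCUB$, and similarly for the primed principles. The directions $\SigmaOneCUB \to \SigmaZeroCUB$ and $\SigmaOneCUBprime \to \SigmaZeroCUBprime$ are then immediate, since a $\Sigma^0_0$ formula is in particular a $\Sigma^0_1$ formula (prefix a vacuous $\exists w$), and this changes neither the distinguished variables, nor the predicate $\cont$, nor the hypothesis and conclusion clauses of $\CUB$ (resp.\ $\CUBprime$); so every $\Sigma^0_0$-instance is already a $\Sigma^0_1$-instance. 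The work is entirely in the other two directions.

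For $\SigmaZeroCUB \to \SigmaOneCUB$ and $\SigmaZeroCUBprime \to \SigmaOneCUBprime$, fix a $\Sigma^0_1$ formula $A(f,\ul{x})$ and write $A(f,\ul{x}) \equiv \exists w \spc A_0(f,\ul{x},w)$ with $A_0 \in \Sigma^0_0$ (a block of witnesses being coded into a single $w$ by pairing). View $A_0(f,\ul{x},w)$ as a $\Sigma^0_0$ formula whose distinguished set variable is $f$ and whose distinguished number tuple is $(\ul{x},w)$. Two observations make the reduction go through. First, by point 1 of lemma \ref{lemma:uniformContinuity}, $\RCAzero$ proves $\cont(A_0)$ outright, so the continuity hypothesis needed to apply $\SigmaZeroCUB$ (resp.\ $\SigmaZeroCUBprime$) to $A_0$ is free — in particular the assumed $\cont(A)$ is never used. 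Second, the totality hypothesis $\forall f : \mathbb{N} \to [n] \spc \exists \ul{x} \spc A(f,\ul{x})$ of the $\SigmaOneCUB$-instance is \emph{literally} the totality hypothesis $\forall f : \mathbb{N} \to [n] \spc \exists (\ul{x},w) \spc A_0(f,\ul{x},w)$ of the $\SigmaZeroCUB$-instance for $A_0$.

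Assuming $\SigmaZeroCUB$ we then get $\exists z \spc \forall f : \mathbb{N} \to [n] \spc \exists (\ul{x},w) \leq z \spc A_0(f,\ul{x},w)$, and in particular $\exists z \spc \forall f : \mathbb{N} \to [n] \spc \exists \ul{x} \leq z \spc A(f,\ul{x})$, which is the conclusion of $\SigmaOneCUB$ for $A$. For the primed case, $\SigmaZeroCUBprime$ applied to $A_0$ yields a single $z$ such that for every $f$ there is some $(\ul{x},w) \leq z$ with $A_0(f,\ul{x},w) \wedge \forall g : \mathbb{N} \to [n] \spc \big(\bar f z = \bar g z \to A_0(g,\ul{x},w)\big)$; dropping the bound on $w$ and replacing $A_0(\cdot,\ul{x},w)$ by $\exists w' \spc A_0(\cdot,\ul{x},w') \equiv A(\cdot,\ul{x})$ in both conjuncts gives $\ul{x} \leq z$ with $A(f,\ul{x}) \wedge \forall g : \mathbb{N} \to [n] \spc \big(\bar f z = \bar g z \to A(g,\ul{x})\big)$, i.e., the conclusion of $\SigmaOneCUBprime$. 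Here it is essential that the single number $z$ supplied by $\CUBprime$ serves at once as the bound on the witness and as the radius of the neighborhood, so that the radius is unchanged when one passes from $A_0$ back to $A$.

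I expect no genuine obstacle. The only two points that call for a little care are exactly those flagged above: that the continuity of the $\Sigma^0_0$ reduct $A_0$ is automatic by lemma \ref{lemma:uniformContinuity}.1 (so one does not need to argue that continuity of a $\Sigma^0_1$ formula is preserved under prenexing), and, in the primed case, the bookkeeping that the bound/radius $z$ is carried through the reduction unchanged; everything else is routine.
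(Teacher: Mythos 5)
Your proof is correct and takes essentially the same approach as the paper: absorb the existential witness of the $\Sigma^0_1$ formula into the distinguished number tuple, note that $\cont$ is automatic for the resulting $\Sigma^0_0$ reduct by lemma~\ref{lemma:uniformContinuity}.1, apply $\SigmaZeroCUB$ (resp.\ $\SigmaZeroCUBprime$), and then weaken back. The only difference is that you spell out the primed case, which the paper dismisses as ``analogous,'' and you correctly flag the one point of care there — that the single $z$ produced serves both as witness bound and neighborhood radius.
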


\begin{proof}
  1.~The right-to-left implication is trivial. Let us consider the left-to-right implication. Consider any $\Sigma^0_1$ formula $\exists \ul{w} \spc A$ where $A$ is a $\Sigma^0_0$ formula. We assume the part $\forall f : \mathbb{N} \to [n] \spc \exists \ul{x} \spc [\exists \ul{w} \spc A(f,\ul{x},\ul{w})]$ of the assumption of $\SigmaOneCUB$. By point 1 of lemma \ref{lemma:uniformContinuity} we have $\cont(A)$ (w.r.t.\ $f,\ul{x},\ul{w}$). By $\SigmaZeroCUB$ applied to $A$ we get $\exists z \spc \forall f : \mathbb{N} \to [n] \spc \exists \ul{x},\ul{w} \leq z \spc A(f,\ul{x},\ul{w})$. From here we get the conclusion $\exists z \spc \forall f : \mathbb{N} \to [n] \spc \exists \ul{x} \leq z \spc [\exists \ul{w} \spc A(f,\ul{x},\ul{w})]$ of $\SigmaOneCUB$.

  2.~The proof is analogous to the proof of the previous point.
\end{proof}

\begin{remark}
  By point 1 of lemma \ref{lemma:uniformContinuity}, $\cont(A)$ is always satisfied for $\Sigma^0_0$ formulas $A$ and so can be dropped in $\SigmaZeroCUB$. The proof of point 1 in proposition \ref{prop:SigmaZeroEquivSigmaOne} above shows that dropping $\cont(A)$ also in $\SigmaOneCUB$ results in an equivalent principle. For $\PiCUB$ this is no longer the case (see the comments at the end of this paper).
\end{remark}

\begin{proposition}
  \label{prop:CUBequivCUBprime}
  $\RCAzero$ proves the following.
  \begin{enumerate}
    \item $\SigmaZeroCUB \leftrightarrow \SigmaZeroCUBprime$.
    \item $\PiCUB \leftrightarrow \PiCUBprime$.
    \item $\CUB \leftrightarrow \CUBprime$.
  \end{enumerate}
\end{proposition}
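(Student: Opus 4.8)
The plan is to prove all three equivalences by the same argument, so I would write the proof generically for a class $\Gamma$ of formulas closed under the operations used (and then specialize $\Gamma$ to $\Sigma^0_0$, $\Pi^0_1$, and ``all formulas'' to obtain points 1, 2 and 3). The direction $\CUBprime \to \CUB$ (and likewise for the restricted versions) is the easy one: the conclusion of $\CUBprime$ literally implies the conclusion of $\CUB$, since $\exists \ul{x} \leq z \spc (A(f,\ul{x}) \wedge \ldots)$ implies $\exists \ul{x} \leq z \spc A(f,\ul{x})$, and the hypotheses are identical. So the whole content is in the forward direction $\CUB \to \CUBprime$.

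For $\CUB \to \CUBprime$, assume $\cont(A)$ and $\forall f : \mathbb{N} \to [n] \spc \exists \ul{x} \spc A(f,\ul{x})$. First apply $\CUB$ to get $z_0$ with $\forall f : \mathbb{N} \to [n] \spc \exists \ul{x} \leq z_0 \spc A(f,\ul{x})$. Now I want a single bound $z$ that works simultaneously as the witness bound for $\ul{x}$ \emph{and} as the modulus of continuity guaranteeing $\bar f z = \bar g z \to A(g,\ul{x})$. The key step is to invoke $\cont(A)$: for each $f$ and the bound $z_0$ there is a $y$ such that $\bar f y = \bar g y$ implies $A(f,\ul{x}) \leftrightarrow A(g,\ul{x})$ for all $\ul{x} \leq z_0$. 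The subtlety is that this $y$ depends on $f$, so I would apply $\CUB$ a \emph{second time} — to the formula $A^*(f,y) \defequiv \forall g : \mathbb{N} \to [n] \spc [\bar f y = \bar g y \to \forall \ul{x} \leq z_0 \spc (A(f,\ul{x}) \leftrightarrow A(g,\ul{x}))]$, together with its negation-of-smaller-values trick, or more simply to the formula stating ``$y$ is a modulus at $f$ for level $z_0$'' — to extract a uniform modulus bound $z_1$. Then $z \defeq \max(z_0, z_1)$ works: given $f$, pick $\ul{x} \leq z_0 \leq z$ with $A(f,\ul{x})$; and if $\bar f z = \bar g z$ then in particular $\bar f z_1 = \bar g z_1$, so by choice of $z_1$ we get $A(g,\ul{x})$.

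The main obstacle is making the ``second application of $\CUB$'' legitimate within the intended fragment: the auxiliary formula $A^*(f,y)$ has an extra universal set quantifier $\forall g : \mathbb{N} \to [n]$, which is exactly the kind of second-order quantifier that one converts to a number quantifier via point 2 of Lemma \ref{lemma:uniformContinuity} — but that lemma is stated for $\Sigma^0_0$ matrices, so for the $\Pi^0_1$ and unrestricted cases I must check the complexity bookkeeping carefully (for $\Sigma^0_0$ this is automatic; for $\Pi^0_1$, after pulling the $\forall g$ inside one still lands in $\Pi^0_1$ up to provable equivalence; for the unrestricted case there is nothing to check). The cleanest route, which I would actually carry out, is to avoid the second application of $\CUB$ altogether: instead observe that $z_0$ from the first application already \emph{is} a uniform witness bound, and then apply $\CUB$ just once to the formula $\widetilde{A}(f,y) \defequiv \exists \ul{x} \leq z_0 \spc [A(f,\ul{x}) \wedge \forall g : \mathbb{N} \to [n] \spc (\bar f y = \bar g y \to A(g,\ul{x}))]$ — which is continuous in $(f,y)$ by Lemma \ref{lemma:uniformContinuity}(1) and total by $\cont(A)$ applied with parameter $z_0$ — obtaining the desired $z$ directly. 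I expect verifying $\cont(\widetilde{A})$ and totality to be the only place demanding care, and both follow routinely from Lemma \ref{lemma:uniformContinuity}.
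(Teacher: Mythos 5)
Your ``easy'' direction is fine, and for points 2 and 3 your two-step plan (apply $\CUB$ to $A$ to get a witness bound $z_0$, then apply $\CUB$ a second time to an auxiliary formula $\widetilde A(f,y)$ incorporating the modulus) does work and differs from the paper mainly in using two applications where the paper uses one: the paper puts the modulus variable $y$ into the existential tuple alongside $\ul{x}$ and forms $B(f,\ul{x},y)\defequiv A(f,\ul{x})\wedge\forall g:\mathbb{N}\to[n]\,(\bar f y=\bar g y\to A(g,\ul{x}))$, observes that $B$ stays $\Pi^0_1$ (resp.\ arbitrary) using Lemma \ref{lemma:uniformContinuity}.2 and that $\cont(B)$ holds, and then applies $\CUB$ once to $B$ with distinguished tuple $(\ul{x},y)$ to get a single $z$ bounding both the witness and the modulus.

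However, your generic argument has a genuine gap for point 1. When $A$ is $\Sigma^0_0$, the auxiliary formula $\widetilde A(f,y)$ (or $A^*(f,y)$) contains the set quantifier $\forall g:\mathbb{N}\to[n]$; by Lemma \ref{lemma:uniformContinuity}.2 this is arithmetized to a $\Pi^0_1$, not a $\Sigma^0_0$, formula. So your second $\CUB$ application is an instance of $\PiCUB$, not of $\SigmaZeroCUB$, and by Theorem \ref{theorem:CUBBigFive} that pushes you from $\WKLzero$ up to $\ACAzero$, which is strictly stronger than what point 1 claims. The correct move for $\Sigma^0_0$ --- and what the paper does --- is to invoke Lemma \ref{lemma:uniformContinuity}.1 directly: for $\Sigma^0_0$ formulas the modulus of continuity $y$ can be chosen \emph{uniformly in $f$}, so no second application of $\CUB$ is needed at all; one just sets $z\defeq\max(y,z_0)$. (Equivalently, you could argue that the uniform modulus lets you rewrite $\widetilde A$ as a $\Sigma^0_0$ formula by replacing $\forall g$ with a bounded quantifier over finite sequences of length $y_0$, but once you have the uniform $y_0$ the second application is vacuous anyway.) Your remark that ``for $\Sigma^0_0$ this is automatic'' is precisely where the argument breaks: it is not automatic, and identifying the uniform modulus is the essential extra idea for point 1.
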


\begin{proof}
  1.~The right-to-left implication is trivial. Let us prove the left-to-right implication. We assume the premise $\cont(A) \wedge \forall f : \mathbb{N} \to [n] \spc \exists \ul{x} \spc A(f,\ul{x})$ of $\SigmaZeroCUBprime$, where $A(f,\ul{x})$ is a $\Sigma^0_0$ formula. Then by $\SigmaZeroCUB$ we have
  \begin{equation}
    \label{eq:CUBequivCUBprime1}
    \exists z' \spc \forall f : \mathbb{N} \to [n] \spc \exists \ul{x} \leq z' \spc A(f,\ul{x},y).
  \end{equation}
  By point 1 of lemma \ref{lemma:uniformContinuity} we have
  \begin{equation}
    \label{eq:CUBequivCUBprime2}
    \exists y \spc \forall f,g : \mathbb{N} \to [n] \spc \forall \ul{x} \leq z' \spc \big[\bar f y = \bar g y \to \big(A(f,\ul{x}) \leftrightarrow A(g,\ul{x})\big)\big].
  \end{equation}
  Let $z \defeq \max(y,z')$. Then from (\ref{eq:CUBequivCUBprime1}) and (\ref{eq:CUBequivCUBprime2}) we get the conclusion of $\SigmaZeroCUBprime$.

  2.~The right-to-left implication is trivial. Let us prove the left-to-right implication. We assume the premise $\cont(\forall \ul{w} \spc A(f,\ul{x},\ul{w})) \wedge \forall f : \mathbb{N} \to [n] \spc \exists \ul{x} \spc [\forall \ul{w} \spc A(f,\ul{x},\ul{w})]$ of $\PiCUBprime$, where $\forall \ul{w} \spc A(f,\ul{x},\ul{w})$ is a $\Pi^0_1$ formula and $A(f,\ul{x},\ul{w})$ is a $\Sigma^0_0$ formula. Then
  \begin{equation*}
    \forall f : \mathbb{N} \to [n] \spc \exists \ul{x} \spc \exists y \spc \underbrace{\big[\forall \ul{w} \spc A(f,\ul{x},\ul{w}) \wedge \forall g : \mathbb{N} \to [n] \spc \big(\bar f y = \bar g y \to \forall \ul{w} \spc A(g,\ul{x},\ul{w})\big)\big]}_{{} \equivdef B}.
  \end{equation*}
  Note that $\bar f y = \bar g y$ is equivalent to
  \begin{equation*}
    \forall i < y \spc \forall z\le n \spc [(i,z)\in f\leftrightarrow (i,z) \in g] \in \Sigma^0_0.
  \end{equation*}
  Moving the quantifiers $\forall \ul{w}$ and $\forall g : \mathbb{N} \to [n]$ in $B$ to the front of $B$ we get an equivalent formula of the form $\forall \ul{w} \spc \forall g : \mathbb{N} \to [n] \spc C$ were $C$ is a $\Sigma^0_0$ formula. By point 2 of lemma \ref{lemma:uniformContinuity}, $\forall g : \mathbb{N} \to [n] \spc C$ is equivalent to a $\Pi^0_1$ formula, so $B$ is equivalent to a $\Pi^0_1$ formula. Therefore we can apply $\PiCUB$ to $B$ (note that $\cont(B)$ w.r.t.\ $f,\ul{x},y$ since $\cont(\forall\ul{w} \spc A(f,\ul{x},\ul{w}))$) getting
  \begin{equation*}
    \exists z \spc \forall f : \mathbb{N} \to [n] \spc \exists \ul{x},y \leq z \spc \big[\forall \ul{w} \spc A(f,\ul{x},\ul{w}) \wedge \forall g : \mathbb{N} \to [n] \spc \big(\bar f y = \bar g y \to \forall \ul{w} \spc A(g,\ul{x},\ul{w})\big)\big].
  \end{equation*}
  Now replacing $y$ by $z$ in $\bar f y = \bar g y$ we get the conclusion of $\PiCUBprime$.

  3.~The proof is analogous to the proof of point 2, disregarding the considerations about the complexity of $B$.
\end{proof}

\begin{theorem} \mbox{}
  \label{theorem:CUBBigFive}
  \begin{enumerate}
    \item $\RCAzero$ proves $\SigmaZeroCUB \leftrightarrow \WKLzero$.
    \item $\RCAzero$ proves $\PiCUB \leftrightarrow \ACAzero$.
    \item $\RCA$ (not $\RCAzero$) proves $\CUB \leftrightarrow \Ztwo$.
  \end{enumerate}
\end{theorem}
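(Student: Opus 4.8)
The plan is to prove each of the three equivalences by establishing the ``hard'' implication ($\CUB$-principle $\to$ system) through a direct application of $\CUB$ to a carefully chosen instance, and the ``easy'' implication (system $\to$ $\CUB$-principle) by a compactness/K\"onig argument; these generalize the classical derivations of FAN-uniform boundedness from $\WKLzero$ and of $\Pi^0_1$-uniform boundedness from $\ACAzero$. I will freely use that, by proposition~\ref{prop:SigmaZeroEquivSigmaOne}, $\SigmaZeroCUB$ coincides with $\SigmaOneCUB$, and standard reverse-mathematical facts from \cite{Simpson}.

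\emph{Part 1.} For $\WKLzero\to\SigmaZeroCUB$ I would argue by contraposition: given $A\in\Sigma^0_0$ with $\forall z\,\exists f : \mathbb{N}\to[n]\,\forall\ul x\le z\,\neg A(f,\ul x)$, take the least modulus of uniform continuity $z\mapsto Y(z)$ supplied (uniformly in $f$) by point~1 of lemma~\ref{lemma:uniformContinuity}, which may be taken monotone. If $Y$ is bounded, $A(f,\ul x)$ depends on $f$ only through a fixed initial segment and finitely many cases give the uniform bound directly. Otherwise $\{z : Y(z)\le\lh\sigma\}$ is finite and primitive recursively bounded in $\lh\sigma$, so
\[
  T\defeq\{\sigma\in[n]^{<\mathbb{N}} : \forall z\,(Y(z)\le\lh\sigma\to\forall\ul x\le z\,\neg A(\sigma\concatenation o,\ul x))\}
\]
is a $\Sigma^0_0$-definable bounded tree; continuity makes $T$ downward closed, the hypothesis makes it infinite, bounded K\"onig's lemma (equivalent to $\WKLzero$ over $\RCAzero$) gives a path $f$, and a last appeal to $Y$ shows $\forall\ul x\,\neg A(f,\ul x)$, contradicting totality. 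For $\SigmaZeroCUB\to\WKLzero$: given an infinite $T\subseteq2^{<\mathbb{N}}$, suppose for contradiction that $[T]=\emptyset$, i.e.\ $\forall f : \mathbb{N}\to[1]\,\exists x\,(\bar f x\notin T)$; apply $\SigmaZeroCUB$ to the $\Sigma^0_0$ formula ``$\bar f x\notin T$'' to get one $z$ with $\forall f\,\exists x\le z\,(\bar f x\notin T)$; downward closure then puts no binary string of length $z$ in $T$, contradicting infinity of $T$, so $T$ has a path.

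\emph{Part 2.} For $\ACAzero\to\PiCUB$ I would again use contraposition, the new point being that the witnesses are available already in $\RCAzero$: if each $U_z\defeq\{f : \mathbb{N}\to[n] \mid \forall\ul x\le z\,\neg A(f,\ul x)\}$ is nonempty, then ``$\sigma\concatenation o\in U_z$'' is $\Sigma^0_1$ (pull the bounded $\forall\ul x$ past the existential witnesses using $\BSigmaOne$) and, $U_z$ being open, $\Sigma^0_1$ least-number choice yields a sequence $\langle f_z\rangle$ with $f_z\in U_z$; the Bolzano-Weierstra\ss{} property of the compact space $[n]^{\mathbb{N}}$ (equivalent to $\ACAzero$ over $\RCAzero$) gives a convergent subsequence with limit $f$, and since $U_z\supseteq U_{z+1}$ and $\cont(A)$ transfers $\neg A$ along the convergence, $f$ lies in all $U_z$, contradicting totality. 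For $\PiCUB\to\ACAzero$ it suffices to obtain the range of an arbitrary $g : \mathbb{N}\to\mathbb{N}$. Put $I_N\defeq\mathbb{N}\setminus\{g(0),\dots,g(N-1)\}$; then $\langle I_N\rangle$ exists by $\Sigma^0_0$ comprehension, is decreasing, and $\bigcap_N I_N=\mathbb{N}\setminus\operatorname{range}(g)$. Assuming this intersection does not exist, apply $\PiCUB$ with $n\defeq1$ to
\[
  A(f,x)\defequiv\exists i\le x\,(f(i)=1\wedge i\notin I_x)\ \vee\ \exists i\le x\,(f(i)=0\wedge\forall N\,(i\in I_N)),
\]
which is equivalent to a $\Pi^0_1$ formula (the bounded $\exists i$ passes the $\forall N$ via $\BSigmaOne$), is trivially continuous, and whose totality ``$\forall f\,\exists x\,A(f,x)$'' unwinds to ``$f\neq\chi_{\bigcap_N I_N}$ for every $f$'', i.e.\ exactly our assumption. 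The uniform bound $z$ it delivers fails at $f\defeq\chi_{I_z}$, since neither disjunct of $A(\chi_{I_z},x)$ can hold for any $x\le z$ because $\bigcap_N I_N\subseteq I_z\subseteq I_x$. Hence $\bigcap_N I_N$, and therefore $\operatorname{range}(g)$, exists, which over $\RCAzero$ is equivalent to $\ACAzero$.

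\emph{Part 3.} $\Ztwo\to\CUB$ is the compactness argument made unconditional: with full comprehension the sets $U_z$ are genuinely clopen (by $\cont(A)$), decreasing and nonempty, so Heine-Borel compactness of $[n]^{\mathbb{N}}$ makes their intersection nonempty, against totality. For $\CUB\to\Ztwo$, given an arbitrary formula $\varphi(m)$, apply $\CUB$ with $n\defeq1$ to $A(f,x)\defequiv\exists i\le x\,[(f(i)=1\wedge\neg\varphi(i))\vee(f(i)=0\wedge\varphi(i))]$ (``$f$ disagrees with $\chi_\varphi$ below $x$''), continuous with modulus $z+1$, whose totality reads ``$\{m:\varphi(m)\}$ does not exist''; assuming the latter, $\CUB$ yields $z$ with $\bar f(z+1)\neq\langle\chi_\varphi(0),\dots,\chi_\varphi(z)\rangle$ for all $f$, yet this finite code does exist, so extending it by zeros contradicts the bound --- hence $\{m:\varphi(m)\}$ exists, giving full comprehension. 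The step forcing the use of $\RCA$ rather than $\RCAzero$ is precisely the existence of the table $\langle\chi_\varphi(0),\dots,\chi_\varphi(z)\rangle$, proved by induction on $z$ over a formula of the same unbounded complexity as $\varphi$. I expect the main obstacle to be the choice of the $\Pi^0_1$ instance in $\PiCUB\to\ACAzero$: the naive ``$f$ disagrees with $\chi_\pi$'' formula that works for $\CUB\to\Ztwo$ is one quantifier too complex to be $\Pi^0_1$, since the clause ``$f(i)=1\wedge\neg\pi(i)$'' is $\Sigma^0_1$; the remedy --- phrasing disagreement against the intersection of a decreasing sequence so that this clause collapses to the decidable ``$f(i)=1\wedge i\notin I_x$'' --- is delicate to set up while keeping the totality hypothesis genuinely true and still deriving a contradiction from the uniform bound.
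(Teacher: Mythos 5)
Your proof is correct, with Part~1 and the $\CUB\to\Ztwo$ half of Part~3 essentially matching the paper's argument; the interesting divergence is in Part~2, where you take genuinely different routes in \emph{both} directions.  For $\ACAzero\to\PiCUB$ you use sequential compactness: choosing $f_z$ in the shrinking clopen sets $U_z$ and invoking the Bolzano--Weierstra\ss{} property of $[n]^{\mathbb{N}}$ (equivalent to $\ACAzero$).  The paper instead constructs, using $\Pi^0_1$ comprehension, an \emph{associate} $\alpha$ for the functional $\phi(f)=\min z\,[\exists\ul x\le z\,A(f,\ul x)]$, obtains from it a Simpson-style code for $\phi$ as a continuous function (lemma~\ref{lemma:associateImpliesHavingCode}), and then appeals to Brown's theorem that codes of continuous functions on compact spaces are provably bounded in $\WKLzero$.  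Your route is shorter and dodges the associate machinery entirely, at the mild cost of needing to gloss over the uniformity of the choice of $f_z$ (one should say this is an application of $\Sigma^0_1$ comprehension, available once you have $\ACAzero$, rather than ``available already in $\RCAzero$'').  For $\PiCUB\to\ACAzero$ you correctly anticipated that the naive ``$f$ disagrees with $\chi_\pi$'' instance has a $\Sigma^0_1$ clause; your fix is to show the range of an arbitrary $g$ exists by working against the decreasing sequence $I_N$, so that the problematic clause collapses to the decidable ``$i\notin I_x$''.  The paper solves the same problem differently: it keeps the disagreement formula but folds the offending existential $\exists m\,\neg A(x,m)$ into the tuple $\ul x$ of bounded variables, wrapping both conjuncts under a single outer $\forall m'$ so that $B(f,x,m)\in\Pi^0_1$; the uniform bound $z$ then also bounds $m$, and the Herbrand witness $f(x)\defeq 0\!\iff\!\forall m\le z\,A(x,m)$ (by $\Sigma^0_0$ comprehension) refutes it.  Both constructions work; yours is arguably more elegant as an isolated reduction, while the paper's keeps the comprehension instance and the refuting function visibly of the same shape as in the $\CUB\to\Ztwo$ case, at the price of the slightly artificial extra $\ul x$-variable.
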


\begin{proof}
  1.~`$\rightarrow$':~We assume $\SigmaZeroCUB$ and, by contradiction, $\neg \WKLzero$. Then we have an infinite binary tree $T \subseteq 2^{<\mathbb{N}}$ with no infinite path, i.e., $\forall f : \mathbb{N} \to [1] \spc \exists x \spc (\bar fx \notin T)$ where the formula $\bar fx \notin T$ is $\Delta^0_1$ and hence $\Sigma^0_1$. By $\SigmaOneCUB$ and so (using proposition \ref{prop:SigmaZeroEquivSigmaOne}) also by $\SigmaZeroCUB$ we have $\exists z \spc \forall f : \mathbb{N} \to [1] \spc \exists x \leq z \spc (\bar f x \notin T)$. This means that every branch in $T$ has length bounded by $z - 1$, so the binary tree $T$ is finite, contradicting the fact that it is infinite.

  `$\leftarrow$':~First we show that $\RCAzero$ proves
  \begin{equation*}
    \WKLzero \to \big[\big(\forall k \spc \exists f : \mathbb{N} \to [n] \spc \forall \ul{x} \leq k \spc A(f,\ul{x})\big) \to \big(\exists f : \mathbb{N} \to [n] \spc \forall \ul{x} \spc A(f,\ul{x})\big)\big],
  \end{equation*}
  where $A$ is $\Sigma^0_0$. We assume $\WKLzero$ and $\forall k \spc \exists f : \mathbb{N} \to [n] \spc \forall \ul{x} \leq k \spc A(f,\ul{x})$. By point 3 of lemma \ref{lemma:uniformContinuity} we can write $A(f,\ul{x})$ as $\forall m \spc B(\bar f m,\ul{x})$ where $B$ is $\Sigma^0_0$. By $\Sigma^0_0$ comprehension we define the bounded tree $T \defeq \{\tau \in [n]^{< \mathbb{N}} : \forall \ul{x},m \leq \lh(\tau) \spc B(\bar \tau m,\ul{x})\}$. We have $\forall p \spc \exists \tau \in T \spc [\lh(\tau) = p]$: taking $k = p$ in our assumption we get an $f : \mathbb{N} \to [n]$ such that $\forall \ul{x},m \leq p \spc B(\bar f m,\ul{x})$ where $\bar f m = \bar \tau m$ for $\tau \defeq \bar f p \in T$ with length $\lh(\tau) = p$. So $T$ is infinite, thus by $\WKLzero$ (actually by bounded K\"onig's lemma that is equivalent to $\WKLzero$ over $\RCAzero$ as proved in lemma IV.1.4 in \cite{Simpson}) there is an infinite path $f : \mathbb{N} \to [n]$ through $T$. Then $\forall \ul{x} \spc A(f,\ul{x})$, i.e., $\forall \ul{x},m \spc B(\bar f m,\ul{x})$: for $p \defeq \max(\ul{x},m)$ we have $\tau \defeq \bar f p \in T$, i.e., $\forall \ul{x'},m' \leq p \spc B(\bar \tau m',\ul{x'})$ where $\bar \tau m' = \bar f m'$, and so taking $\ul{x'} = \ul{x}$ and $m' = m$ we get $B(\bar fm,\ul{x})$.

  Finally, we show that $\RCAzero$ proves $\WKLzero \to \SigmaZeroCUB$ taking the contrapositive of the inner implication proved in the previous paragraph.

  2.~It is enough to show that $\RCAzero$ proves that $\PiCUB$ is equivalent to $\Pi^0_1$ comprehension, since $\Pi^0_1$ comprehension is equivalent to $\Sigma^0_1$ comprehension and in turn, as proved in lemma III.1.3 in \cite{Simpson}, $\Sigma^0_1$ comprehension is equivalent over $\RCAzero$ to $\ACAzero$.

  `$\rightarrow$':~Let us prove that $\PiCUB$ implies $\Pi^0_1$ comprehension. Consider any $\Pi^0_1$ formula $\forall \ul{m} \spc A(x,\ul{m})$ where $A(x,\ul{m})$ is a $\Sigma^0_0$ formula. By contradiction, we assume $\neg \exists X \spc \forall x \spc [x \in X \leftrightarrow \forall \ul{m} \spc A(x,\ul{m})]$, i.e., $\neg \exists f : \mathbb{N} \to [1] \spc \forall x \spc [f(x) = 0 \leftrightarrow \forall \ul{m} \spc A(x,\ul{m})]$. Then
  \begin{equation*}
    \forall f : \mathbb{N} \to [1] \spc \exists x,\ul{m} \spc \underbrace{\forall \ul{m'} \spc \neg \big[\big(f(x) = 0 \to A(x,\ul{m})\big) \wedge \big(A(x,\ul{m'}) \to f(x) = 0\big)\big]}_{{} \equivdef B(f,x,\ul{m})}.
  \end{equation*}
  By $\PiCUB$ applied to the (continuous) $\Pi^0_1$ formula $B$ we get a $z$ such that
  \begin{equation*}
    \forall f : \mathbb{N} \to [1] \spc \exists x \leq z \spc \neg \big[\big(f(x) = 0 \to \forall \ul{m} \leq z \spc A(x,\ul{m})\big) \wedge \big(\forall \ul{m'} \spc A(x,\ul{m'}) \to f(x) = 0\big)\big].
  \end{equation*}
  But that is contradicted by the function
  \begin{equation*}
    \begin{split}
      f : \mathbb{N} &\to [1] \\
      x              &\mapsto
        \begin{cases}
          0 & \text{if } \forall \ul{m} \leq z \spc A(x,\ul{m}), \\
          1 & \text{otherwise}
        \end{cases}
    \end{split}
  \end{equation*}
  which is definable by $\Sigma^0_0$ comprehension.

  `$\leftarrow$':~Now let us see that $\Pi^0_1$ comprehension implies $\PiCUB$. We assume $\Pi^0_1$ comprehension, and, therefore, we have $\ACAzero$. We assume the premise $\cont(A) \wedge {\forall f : \mathbb{N} \to [n]} \spc \exists \ul{x} \spc A(f,\ul{x})$ of $\PiCUB$, where $A(f,\ul{x})$ is a $\Pi^0_1$ formula, and we want to prove the conclusion $\exists z \spc \forall f : \mathbb{N} \to [n] \spc \exists \ul{x} \leq z \spc A(f,\ul{x})$ of $\PiCUB$. Take any $f : \mathbb{N} \to [n]$. In $\ACAzero$ there exists $\min z \spc [\exists \ul{x} \leq z \spc A(f,\ul{x})]$. Consider the functional
  \begin{equation*}
    \begin{split}
      \phi : [n]^\mathbb{N} &\to \mathbb{N} \\
      f                     &\mapsto \min z \spc [\exists \ul{x} \leq z \spc A(f,\ul{x})].
    \end{split}
  \end{equation*}
  This functional cannot directly be formed in $\ACAzero$ as it is a 3rd order object. However, we will show now that it has a (2nd order) code as a continuous function in the sense of \cite{Simpson}. Let
  \begin{align*}
    \l \in \Seq_{\leq n} \defequiv {} & l \in \Seq \wedge l \concatenation o : \mathbb{N} \to [n], \\
    B(l)                 \defequiv {} & l \in \Seq_{\leq n} \wedge \forall l' \in \Seq_{\leq n} \\
                                      & \big[l \subseteq l' \to \min z \spc [\exists \ul{x} \leq z \spc A(l \concatenation o,\ul{x})] = \min z \spc [\exists \ul{x} \leq z \spc A(l' \concatenation o,\ul{x})]\big].
  \end{align*}
  In $\ACAzero$ there exists the following function $\alpha$ which -- as we will argue now -- is an associate for $\phi$:
  \begin{equation*}
    \alpha(l) \defeq
    \begin{cases}
      \min z \spc [\exists \ul{x} \leq z \spc A(l \concatenation o,\ul{x})] + 1 & \text{if } B(l), \\
      0 & \text{otherwise.}
    \end{cases}
  \end{equation*}
  Take any $\beta : \mathbb{N} \to [n]$. By $\cont(A)$ there exists a $y$ such that
  \begin{equation*}
    \forall g : \mathbb{N} \to [n] \spc \big[\bar \beta y = \bar g y \to \forall \ul{x} \leq \min z \spc [\exists \ul{\tilde{x}} \leq z \spc 
A(\beta,\ul{\tilde{x}})] \spc \big(A(\beta,\ul{x}) \leftrightarrow A(g,\ul{x})\big)\big],
  \end{equation*}
  thus
  \begin{equation}
    \label{eq:continuityPointForBarBetaY}
    \forall g : \mathbb{N} \to [n] \spc \big[\bar \beta y = \bar g y \to \min z \spc [\exists \ul{x} \leq z \spc A(\beta,\ul{x})] = \min z \spc [\exists \ul{x} \leq z \spc A(g,\ul{x})]\big].
  \end{equation}

  (a)~First we prove that there exists an $m$ such that $\alpha(\bar \beta m) > 0$. Let $m \defeq y$. We have $B(\bar \beta m)$: for all $l' \in \Seq_{\leq n}$ such that $\bar \beta m \subseteq l'$, taking $g = \bar \beta m \concatenation o$ and $g = l' \concatenation o$ in (\ref{eq:continuityPointForBarBetaY}) we get, respectively,
  \begin{align*}
    \min z \spc [\exists \ul{x} \leq z \spc A(\beta,\ul{x})] &= \min z \spc [\exists \ul{x} \leq z \spc A(\bar \beta m \concatenation o,\ul{x})], \\
    \min z \spc [\exists \ul{x} \leq z \spc A(\beta,\ul{x})] &= \min z \spc[\exists \ul{x} \leq z \spc A(l' \concatenation o,\ul{x})].
  \end{align*}
  Thus $\min z \spc [\exists \ul{x} \leq z \spc A(\bar \beta m \concatenation o,\ul{x})] = \min z \spc [\exists \ul{x} \leq z \spc A(l' \concatenation o,\ul{x})]$. Since we have $B(\bar \beta m)$, then by definition of $\alpha$ we have $\alpha(\bar \beta m) > 0$.

  (b)~Now we take the least $m$ such that $\alpha(\bar \beta m) > 0$ and we prove $\alpha(\bar \beta m) = \phi(\beta) + 1$. Since $\alpha(\bar \beta m) > 0$ we have $B(\bar \beta m)$. Let $w \defeq \max(m,y)$. By $B(\bar \beta m)$ and taking $g = \bar \beta w \concatenation o$ in (\ref{eq:continuityPointForBarBetaY}) we get, respectively,
  \begin{align*}
    \overbrace{\min z \spc [\exists \ul{x} \leq z \spc A(\bar \beta m \concatenation o,\ul{x})]}^{{} = \alpha(\bar \beta m) - 1} &=
\min z \spc [\exists \ul{x} \leq z \spc A(\bar \beta w \concatenation o,\ul{x})], \\
    \underbrace{\min z \spc [\exists \ul{x} \leq z \spc A(\beta,\ul{x})]}_{{} = \phi(\beta)} &= \min z \spc [\exists \ul{x} \leq z \spc A(\bar \beta w \concatenation o,\ul{x})].
  \end{align*}
  Thus $\alpha(\bar \beta m) = \phi(\beta) + 1$.

  This concludes the proof that $\alpha$ is an associate for $\phi$. Thus by lemma \ref{lemma:associateImpliesHavingCode}, $\phi$ has a code as continuous function. Since $[n]^{\mathbb{N}}$ is (provably already in $\RCAzero$) a compact metric space (see \cite{Simpson} (examples III.2.6)) it follows from \cite{Brown} (Theorem 4.1) that (provably in $\WKLzero$ and so a-fortiori in $\ACAzero$) $\phi$ has an upper bound $z$. Then $\forall f : \mathbb{N} \to [n] \spc \exists \ul{x} \leq \phi(f) \leq z \spc A(f,\ul{x})$.

  3.~We prove that $\Ztwo$ implies $\CUB$ essentially in the same way that we proved in the previous point that $\Pi^0_1$ comprehension implies $\PiCUB$. To see that, conversely, $\CUB$ implies (relative to $\RCA$) $\Ztwo$ it is enough to show that $\CUB$ implies the comprehension axiom for arbitrary formulas $A$. By induction on $z$ we prove $\forall z \spc \exists m \in \Seq_{\leq 1} \spc \big[\lh m = z + 1 \wedge \forall x \leq z \spc \big(m(x) = 0 \leftrightarrow A(x)\big)\big]$. By contradiction assume $\neg \exists X \spc \forall x \spc [x \in X \leftrightarrow A(x)]$, that is, $\forall f : \mathbb{N} \to [1] \spc \exists x \spc B(f,x)$ where $B(f,x) \defequiv \neg [f(x) = 0 \leftrightarrow A(x)]$. Clearly $\cont(B)$, so applying $\CUB$ to $B$ we get $\exists z \spc \forall f: \mathbb{N} \to [1] \spc \exists x \leq z \spc B(f,x)$. But this is contradicted by $f = m \concatenation o$.
\end{proof}

\begin{remark}
  As the proof above shows, the strength of the various CUB-principles considered does not depend on whether they are formulated with general $n$ or just with $n = 1$ (this can also be seen directly using the construction on page 220 in \cite{TroelstraDalen(88)}). Note, however, that $\IPP$ restricted to $n = 1$ or any fixed $n$ is much weaker (and essentially provable in pure logic) than $\IPP$.
\end{remark}

\section{Proofs of $\IPP \to \FIPPtwo$ and  $\IPP \to \FIPPthree$ using continuous uniform boundedness}

In the previous section we calibrated the strength of $\SigmaZeroCUB$ and $\PiCUB$ in terms of the ``big five''. In this section we give upper bounds on the strength of the implications $\IPP \to \FIPPtwo$ and $\IPP \to \FIPPthree$.

\begin{theorem} \mbox{}
  \label{PiCUB-application}
  \begin{enumerate}
    \item $\RCAzero + \SigmaZeroCUB$ proves $\IPP \to \FIPPtwo$.
    \item $\RCAzero + \PiCUB$ proves $\IPP \to \FIPPthree$.
  \end{enumerate}
\end{theorem}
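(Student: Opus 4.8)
The plan is to handle both parts with the same three-step pattern. First, derive from $\IPP$ a ``pointwise'' statement of the shape $\forall f : \mathbb{N} \to [n] \spc \exists k \spc (\ldots)$. Second, apply the relevant instance of the continuous uniform boundedness principle to turn this into a bound $z$ on $k$ that is uniform in $f$, i.e.\ $\exists z \spc \forall f : \mathbb{N} \to [n] \spc \exists k \le z \spc (\ldots)$. Third, read off $\FIPPtwo$ (resp.\ $\FIPPthree$) with $k \defeq z$, using that the color classes of a coloring of $[z]$ depend only on its restriction to $[z]$. The essential difference between the two parts --- and the reason part 2 needs the stronger $\PiCUB$ --- is that $\FIPPtwo$ only asks for a monochromatic \emph{subset}, which survives enlarging the ground set, whereas $\FIPPthree$ (like $\FIPPone$) asks for an actual color class $A_l = f^{-1}(c)$, which does not; to repair this one builds the ``stabilization index'' from point 2 of lemma \ref{lemma:limitUniquenessAndStabilityPoint} into the formula handed to $\CUB$, and that clause is inherently $\Pi^0_1$.

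For part 1, fix $n$ and $F \in AS$ and work with $A'(f,k,l) \defequiv A_l \subseteq [k] \wedge |A_l| > F(l) \wedge f|_{A_l}\text{ constant}$; its matrix is equivalent to a $\Sigma^0_0$ formula by point 2 of lemma \ref{lemma:cardinality} and depends on $f$ only through its values on $[k]$, so $\cont(A')$ holds by point 1 of lemma \ref{lemma:uniformContinuity} (and in any case $\cont$ may be dropped in the $\Sigma^0_1$ case). I would first prove, using $\IPP$, that $\forall f : \mathbb{N} \to [n] \spc \exists k,l \spc A'(f,k,l)$, i.e.\ the body $B(F,f)$ of (\ref{eq:weakFIPP}): given $f$, pick by $\IPP$ a color $c$ with $C \defeq f^{-1}(c)$ infinite; the sequence $(l_m)$ with $A_{l_m} = C \cap [m]$ is a nested sequence with union $C$, so $F \in AS$ gives $i$ with $F(l_j) = F(l_i)$ for all $j \ge i$, and since $C$ is infinite there is $m \ge i$ with $|C \cap [m]| > F(l_i)$ (by point 3 of lemma \ref{lemma:cardinality} applied to $C$), whence $k \defeq m$, $l \defeq l_m$ witness $B(F,f)$. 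Since $\RCAzero + \SigmaZeroCUB$ proves $\SigmaOneCUB$ (proposition \ref{prop:SigmaZeroEquivSigmaOne}), apply $\SigmaOneCUB$ to obtain $z$ with $\forall f : \mathbb{N} \to [n] \spc \exists k,l \le z \spc A'(f,k,l)$. Finally take $k \defeq z$ in $\FIPPtwo$: given $f : [z] \to [n]$, extend it by zeros to $\tilde f : \mathbb{N} \to [n]$ and pick $k',l \le z$ with $A'(\tilde f,k',l)$; then $A_l \subseteq [k'] \subseteq [z]$, so $f|_{A_l} = \tilde f|_{A_l}$ is constant and $|A_l| > F(l)$, which is the conclusion of $\FIPPtwo$.

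For part 2, fix $n$ and $F \in \ASNIS$ and let $A(f,k)$ be
\begin{equation*}
  \exists c \le n \spc \exists d \le k \spc \big[\, |A_{l_B}| > F(l_B) \ \wedge\ \forall l \in \Seq \spc \big( A_l \cap [d] = A_{l_B} \cap [d] \to F(l) = F(l_B) \big) \,\big],
\end{equation*}
where $l_B$ is the minimal code of $\{i \le k : f(i) = c\}$. This depends on $f$ only through its values on $[k]$, so $\cont(A)$ holds (which, unlike in the $\Sigma^0_1$ case, really is needed for $\PiCUB$); and, using point 2 of lemma \ref{lemma:cardinality} for the first conjunct and the totality of $F$ to express $F(l)=F(l_B)$ and $|A_{l_B}| > F(l_B)$ by universal conditions over $F$-values, one checks that $A$ is equivalent to a $\Pi^0_1$ formula. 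I would first show $\forall f : \mathbb{N} \to [n] \spc \exists k \spc A(f,k)$: by $\IPP$ pick $c$ with $C \defeq f^{-1}(c)$ infinite; by point 2 of lemma \ref{lemma:limitUniquenessAndStabilityPoint} pick $v,d$ with $\forall l \in \Seq \spc (A_l \cap [d] = C \cap [d] \to F(l) = v)$; and choose $k \ge d$ with $|C \cap [k]| > v$. Then $\{i \le k : f(i) = c\} = C \cap [k]$ agrees with $C$ on $[d]$, so $F(l_B) = v < |C \cap [k]| = |A_{l_B}|$, and the $\forall l$-clause for this $d$ is exactly the conclusion of lemma \ref{lemma:limitUniquenessAndStabilityPoint}(2). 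Then apply $\PiCUB$ to obtain $z$ with $\forall f : \mathbb{N} \to [n] \spc \exists k \le z \spc A(f,k)$.

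For the concluding step, take $k \defeq z$ in $\FIPPthree$: given $f : [z] \to [n]$, extend it by zeros to $\tilde f : \mathbb{N} \to [n]$ and pick $k \le z$ with witnesses $c \le n$, $d \le k$ for $A(\tilde f, k)$; put $B \defeq \{i \le k : f(i) = c\}$ (equal to $\{i \le k : \tilde f(i) = c\}$ as $k \le z$) and let $B' \defeq f^{-1}(c) = \{i \le z : f(i) = c\}$ be the genuine color class of $f$, with minimal code $l_{B'}$. Every element of $B' \setminus B$ exceeds $k \ge d$, so $B' \cap [d] = B \cap [d]$; instantiating the $\forall l$-clause of $A(\tilde f, k)$ at $l_{B'}$ gives $F(l_{B'}) = F(l_B)$, and with $|B'| \ge |B|$ (point 4 of lemma \ref{lemma:cardinality}) and $|B| = |A_{l_B}| > F(l_B)$ this yields $|A_{l_{B'}}| = |B'| > F(l_{B'})$, so $A_{l_{B'}} = f^{-1}(c)$ is a ``big'' color class of $f$, which is the conclusion of $\FIPPthree$. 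I expect the main obstacle to be the design and verification of $A(f,k)$: it must be continuous in $f$ (so it may not mention which color class of $f$ is infinite), it must be $\Pi^0_1$ (so the bound $d$ and the universal clause have to sit inside the formula rather than be produced after the fact), and it must keep enough information to transfer a ``big color class'' from ground set $[k]$ to ground set $[z]$; verifying $\cont(A)$, checking that the universal clause survives this enlargement, and keeping all auxiliary quantifiers bounded so that $A$ really is $\Pi^0_1$ are the delicate points, and together they are why a $\Pi^0_1$ rather than $\Sigma^0_0$ instance of $\CUB$ appears to be unavoidable here.
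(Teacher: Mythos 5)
Your proof is correct, and part~1 is essentially identical to the paper's. In part~2 you follow the same high-level strategy (derive a $\forall f \spc \exists k$ statement from $\IPP$ via lemma~\ref{lemma:limitUniquenessAndStabilityPoint}.2, then apply $\PiCUB$ and read off $\FIPPthree$), but you hand $\PiCUB$ a genuinely different $\Pi^0_1$ formula. Yours existentially quantifies a separate stabilization radius $d \le k$ and keeps the two conjuncts $|A_{l_B}| > F(l_B)$ and $\forall l \in \Seq \spc \big(A_l \cap [d] = A_{l_B} \cap [d] \to F(l) = F(l_B)\big)$ explicit, where $l_B$ is the minimal code of $f^{-1}(c) \cap [k]$. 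The paper instead uses the leaner $B(f,k) \defequiv \exists c \in [n] \spc \forall l \in \Seq \spc \big[A_l \cap [k] = f^{-1}(c) \cap [k] \to |A_l| > F(l)\big]$, which already literally states the $\FIPPthree$-conclusion at scale~$k$; once $\PiCUB$ produces a uniform $z$, the paper just invokes upward monotonicity of $B$ in $k$ and takes $l$ to be a code of $f^{-1}(c)$ for $f : [z] \to [n]$. Your formula makes the role of lemma~\ref{lemma:limitUniquenessAndStabilityPoint}.2 more transparent (the value and radius it produces are exactly your witnesses) and, by carrying $d$ along, you avoid the monotonicity step; the cost is the extra transfer argument at the end, moving the $F$-value from $f^{-1}(c) \cap [k]$ to $f^{-1}(c) \cap [z]$ via the $\forall l$-clause at radius $d$. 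The paper's choice absorbs all of this into the single consequent $|A_l| > F(l)$, giving a shorter finish. Both routes are sound and exploit $\PiCUB$ at the same logical complexity.
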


\begin{proof}
  1.~Take any $n$ and $F \in AS$. Let
  \begin{equation*}
    B(f,k) \defequiv \exists l \in \Seq \spc \big[A_l \subseteq [k] \wedge |A_l| > F(l) \wedge f|_{A_l} \text{ constant}\big].
  \end{equation*}
  Let us prove $\forall f : \mathbb{N} \to [n] \spc \exists k \spc B(f,k)$. Take any $f : \mathbb{N} \to [n]$. By $\IPP$ there exists an infinite color class $f^{-1}(c)$ with $c \in [n]$. By primitive recursion in $f$ define a sequence $(l_m)$ where each $l_m \in \Seq$ is such that $A_{l_m} = f^{-1}(c) \cap [m]$. Then $(l_m)$ is a nested sequence with union $f^{-1}(c)$, so there exists a $k'$ such that $\forall m \geq k' \spc [F(l_m) = F(l_{k'})]$. Since $f^{-1}(c)$ is infinite, by points 3 and 4 of lemma \ref{lemma:cardinality} there exists a $k''$ such that $\forall m \geq k'' \spc [|A_{l_m}| > F(l_{k'})]$. Let $k \defeq \max(k',k'')$. Then we have $B(f,k)$. This finishes the proof of $\forall f : \mathbb{N} \to [n] \spc \exists k \spc B(f,k)$.

  Notice that $A_l \subseteq [k]$ and ``$f|_{A_l} \text{ constant}$'' are equivalent to some bounded formulas and $|A_l| > F(l)$ is equivalent to the $\Sigma^0_0$ formula
  \begin{equation*}
    \exists j\le l \spc \exists i<j \spc [|A_l|=j\wedge (l,i)\in F].
  \end{equation*}
  Thus $B(f,k)$ is equivalent to a $\Sigma^0_1$ formula. Also notice that we have $\cont(B)$.

  By $\SigmaOneCUB$ we get $\exists k \spc \forall f : \mathbb{N} \to [n] \spc \exists k''' \leq k \spc B(f,k''')$. Since $B(f,k)$ is monotone in $k$, i.e., $k''' \leq k \wedge B(f,k''') \to B(f,k)$ we get $\exists k \spc \forall f : \mathbb{N} \to [n] \spc B(f,k)$. Now since $f$ is only applied in $B(f,k)$ to arguments in $[k]$, we can consider only functions $f$ with domain $[k]$: $\exists k \spc \forall f : [k] \to [n] \spc B(f,k)$.

  2.~Take any $n$ and $F \in \ASNIS$. Let
  \begin{equation*}
    B(f,k) \defequiv \exists c \in [n] \spc \forall l \in \Seq \spc \big[A_l \cap [k] = f^{-1}(c) \cap [k] \to |A_l| > F(l)\big].
  \end{equation*}
  Let us prove $\forall f : \mathbb{N} \to [n] \spc \exists k \spc B(f,k)$. Take any $f : \mathbb{N} \to [n]$. As in the proof of the previous point we have an infinite color class $f^{-1}(c)$ and a sequence $(l_m)$ where each $l_m \in \Seq$ is such that $A_{l_m} = f^{-1}(c) \cap [m]$. Then $(l_m)$ weakly converges to $f^{-1}(c)$, so there exists a $k'$ such that
  \begin{equation}
    \label{eq:kOne}
    \forall m \geq k' \spc [F(l_m) = F(l_{k'})].
  \end{equation}
  Since $f^{-1}(c)$ is infinite, then by points 3 and 4 of lemma \ref{lemma:cardinality} there exists a $k''$ such that
  \begin{equation}
    \label{eq:kTwo}
    \forall m \geq k'' \spc [|f^{-1}(c) \cap [m]| > F(l_{k'})].
  \end{equation}
  By lemma \ref{lemma:limitUniquenessAndStabilityPoint}.2 there exist $c'$ and $k'''$ such that
  \begin{equation}
    \label{eq:kThree}
    \forall l \in \Seq \spc \big[A_l \cap [k'''] = f^{-1}(c) \cap [k'''] \to F(l) = c'\big].
  \end{equation}
  Let $k \defeq \max(k',k'',k''')$. Taking $m = k$ in (\ref{eq:kOne}) and $l = l_k$ in (\ref{eq:kThree}) we get $c' = F(l_{k'})$. For all $l \in \Seq$, if $A_l \cap [k] = f^{-1}(c) \cap [k]$, then $A_l \cap [k'''] = f^{-1}(c) \cap [k''']$ and by point 4 of lemma \ref{lemma:cardinality} we have $|A_l| \geq |f^{-1}(c) \cap [k]|$, thus by (\ref{eq:kTwo}) and (\ref{eq:kThree}) we get $|A_l| > F(l)$. This finishes the proof of $\forall f : \mathbb{N} \to [n] \spc \exists k \spc B(f,k)$.

  Notice that $A_l \cap [k] = f^{-1}(c) \cap [k]$ is equivalent to a $\Sigma^0_0$ formula and by point 2 of lemma \ref{lemma:cardinality} $|A_l| > F(l)$ is (as shown above) equivalent to a $\Sigma^0_0$ formula, so (using $B\Sigma^0_1$) $B$ is equivalent to a $\Pi^0_1$ formula. Also notice that we have $\cont(B)$ since the only occurrence of $f$ in $B$ is $f^{-1}(c) \cap [k]$, i.e., $(f|_{[k]})^{-1}(c)$.

  Now, analogously to the proof of the previous point, apply $\PiCUB$, use the monotonicity of $B(f,k)$ on $k$, and notice that we can restrict the functions $f$ to $[k]$. Finally, taking $l = l_k$ (so $A_{l_k} = f^{-1}(c)$ where $f : [k] \to [n]$) we get the $\FIPPthree$.
\end{proof}

\begin{corollary} \mbox{}
  \label{equivalence}
  \begin{enumerate}
    \item $\WKLzero$ proves $\IPP \leftrightarrow \FIPPtwo$.
    \item $\ACAzero$ proves $\IPP \leftrightarrow \FIPPthree$.
  \end{enumerate}
\end{corollary}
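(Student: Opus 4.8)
The plan is to derive this corollary purely by assembling the three main results already proved: Theorem \ref{forward} (the ``forward'' implications $\FIPPtwo\to\IPP$ and $\FIPPthree\to\IPP$ over $\RCAzero$), Theorem \ref{PiCUB-application} (the ``reverse'' implications $\IPP\to\FIPPtwo$ over $\RCAzero+\SigmaZeroCUB$ and $\IPP\to\FIPPthree$ over $\RCAzero+\PiCUB$), and Theorem \ref{theorem:CUBBigFive} (which identifies $\SigmaZeroCUB$ with $\WKLzero$ and $\PiCUB$ with $\ACAzero$, both over $\RCAzero$). Nothing new needs to be constructed; the work is entirely in chaining the right implications together.

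For point~1, I would first note that $\WKLzero$ extends $\RCAzero$, so Theorem \ref{forward}.1 immediately gives $\WKLzero\vdash\FIPPtwo\to\IPP$. For the converse, by Theorem \ref{theorem:CUBBigFive}.1 we have $\RCAzero\vdash\WKLzero\to\SigmaZeroCUB$, hence $\WKLzero$ proves $\SigmaZeroCUB$; combining this with Theorem \ref{PiCUB-application}.1 (which works over $\RCAzero+\SigmaZeroCUB$) yields $\WKLzero\vdash\IPP\to\FIPPtwo$. Putting the two directions together gives $\WKLzero\vdash\IPP\leftrightarrow\FIPPtwo$.

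Point~2 is handled in exactly parallel fashion, one level up in the ``big five'': $\ACAzero$ extends $\RCAzero$, so Theorem \ref{forward}.2 gives $\ACAzero\vdash\FIPPthree\to\IPP$; and by Theorem \ref{theorem:CUBBigFive}.2 we have $\RCAzero\vdash\ACAzero\to\PiCUB$, so $\ACAzero$ proves $\PiCUB$, and then Theorem \ref{PiCUB-application}.2 (which works over $\RCAzero+\PiCUB$) delivers $\ACAzero\vdash\IPP\to\FIPPthree$. Combining yields $\ACAzero\vdash\IPP\leftrightarrow\FIPPthree$.

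Since every ingredient is already in hand, there is no genuine obstacle here; the only thing to be careful about is bookkeeping of which base theory each cited implication is proved over, and the (already-discharged) fact that the applications of $\CUB$ in Theorem \ref{PiCUB-application} tacitly use the upgrade from $\SigmaZeroCUB$/$\PiCUB$ to their $\Sigma^0_1$/$\CUBprime$ reformulations via Propositions \ref{prop:SigmaZeroEquivSigmaOne} and \ref{prop:CUBequivCUBprime}, so no extra strength is smuggled in. Thus the corollary follows with a one-paragraph proof for each item.
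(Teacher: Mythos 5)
Your argument is correct and is exactly the route the paper intends: the corollary follows by combining Theorem~\ref{forward} (forward directions over $\RCAzero$), Theorem~\ref{PiCUB-application} (reverse directions over $\RCAzero+\SigmaZeroCUB$ and $\RCAzero+\PiCUB$), and Theorem~\ref{theorem:CUBBigFive} (equivalences $\SigmaZeroCUB\leftrightarrow\WKLzero$ and $\PiCUB\leftrightarrow\ACAzero$ over $\RCAzero$). Nothing further is needed, and your bookkeeping of the base theories is right.
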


While the first equivalence shows that $\FIPPtwo$ is a nontrivial finitization of $\IPP$ as neither principle is derivable in $\WKLzero$, the second equivalence does not establish this for $\FIPPthree$ since $\ACAzero$ not only proves $\IPP$ (and hence $\FIPPthree$) but even much stronger principles (e.g., Ramsey's theorem $\RT(k)$ for every fixed $k$, see \cite{Simpson}, or, on the arithmetical side, $\BSigmaInfty$). So while the fact that $\RCAzero$ suffices to prove $\FIPPthree\to \IPP$ shows that $\FIPPthree$
is strong enough to count as a ``finitization'' of $\IPP$, the fact that for the other direction we only have proofs using $\ACAzero$ leaves open the possibility that $\FIPPthree$ may be too strong to be a faithful finitization of $\IPP$.

\section{Historical comments on $\CUB$}

Without the continuity assumption $\cont(A)$, principles of the form $\CUB$ feature prominently in intuitionistic mathematics under the label of ``fan principles''. In fact, in intuitionistic analysis it is common to assume (classically inconsistent) continuity principles that, in particular, imply $\cont(A)$ (see \cite{TroelstraDalen(88)}). In our language context of 2nd order arithmetic, $\Sigma^0_0$ formulas $A$ automatically satisfy $\cont(A)$ and so in $\SigmaZeroCUB$ and (by its reduction to $\SigmaZeroCUB$) even in $\SigmaOneCUB$ one can drop the assumption $\cont(A)$. However, in contexts formulated in the language in all finite types over $\mathbb{N}$, the corresponding version without $\cont(A)$, called $\SigmaOneUB$, is not valid in the full type structure over $\mathbb{N}$ but satisfies very useful conservation results. $\SigmaOneUB$ was first introduced in \cite{Kohlenbach(95A)} and is studied in detail in \cite{KohlenbachApplied} (for a systematic proof-theoretic treatment of even more general forms of uniform boundedness by a specially designed so-called bounded functional interpretation see \cite{FerreiraOliva}). Recently in \cite{Kohlenbach(06),KohlenbachApplied}, $\SigmaOneUB$ was generalized to a principle $\ExistsUBX$ dealing with uniformities in the absence of compactness for abstract bounded metric and hyperbolic spaces. Again, while not valid in the intended model, the principle satisfies strong conservation theorems and so can be used safely for proofs of large classes of statements.

With classical logic alone (essentially), however, even $\PiCUB$ becomes inconsistent (and so in particular over $\RCAzero$ and much weaker systems) if the assumption $\cont(A)$ is dropped. E.g., just consider the logically valid statement
\begin{equation*}
  \forall f : \mathbb{N} \to [1] \spc \exists x \in \mathbb{N} \spc \underbrace{\forall y \in \mathbb{N} \spc [f(y)=0 \to f(x)=0]}_{{} \equivdef A(f,x) \in \Pi^0_1}.
\end{equation*}
Then $\CUB$ without the continuity assumption $\cont(A)$ (which does not hold here) would imply that
\begin{equation*}
  \exists z \spc \forall f : \mathbb{N} \to [1] \spc \big[\exists y \in \mathbb{N} \spc \big(f(y) = 0\big) \to \exists x \le z \spc \big(f(x) = 0\big)\big]
\end{equation*}
which obviously is wrong. A syntactic condition that guarantees $\cont(A)$ to hold is that $A(f,x)$ can be written as $\tilde{A}\big(\bar{f}(t(x)),x\big)$ for some number term $t$ (possibly with further number parameters of $A$), where $\tilde{A}(z,x)\in\Pi^0_1$ does not contain $f$. This is the case in the use of $\PiCUB$ in the proof of theorem \ref{PiCUB-application}.2 (with $t(k,c) \defeq k + 1$). In fact, (a version of) such a form (denoted by $\PiOneUBRes$) of $\CUB$ is considered in \cite{Kohlenbach(97A)}, where it is shown to imply the Bolzano-Weierstra\ss{} property of $[0,1]^d$ (over an extremely weak base system). Moreover, the proof of theorem \ref{theorem:CUBBigFive}.2 immediately shows that $\PiOneUBRes$ still implies $\Pi^0_1$ comprehension.

\end{document}